\newtheorem{theorem}{Theorem}[section]
\newtheorem{utv*}{Proposition}
\newtheorem{hyp*}{Conjecture}
\newtheorem*{example*}{Example}
\newtheorem{lemma}[theorem]{Lemma}
\newtheorem{corollary}[theorem]{Corollary}
\newtheorem*{th*}{Theorem}
\newtheorem{prop}[theorem]{Proposition}
\theoremstyle{definition}
\newtheorem{defin}[theorem]{Definition}
\newtheorem{example}[theorem]{Example}
\theoremstyle{remark}
\newtheorem*{rem*}{Remark}
\def\sli{\sum\limits}
\def\ili{\int\limits}
\def\R{\mathbb{R}}
\def\ep{\varepsilon}
\def\vf{\varphi}
\newcommand{\diam}{\operatorname{diam}}
\newcommand{\dist}{\operatorname{dist}}
\def\cyr{\fontencoding{OT2}\fontfamily{wncyr}\selectfont}
\DeclareTextFontCommand{\textcyr}{\cyr}
\newcounter{vremennyj}
\def\t{\tilde}
\numberwithin{equation}{section}
\begin{document}
\title[Local properties of minimal energy points]{Local properties of Riesz minimal energy configurations and equilibrium measures}
\author{D. P. Hardin}
\address{Center for Constructive Approximation, Department of Mathematics, Vanderbilt University}
\email{doug.hardin@vanderbilt.edu}
\author{A. Reznikov}
\email{aleksandr.b.reznikov@vanderbilt.edu}
\author{E. B. Saff}
\thanks{The research of D. P. Hardin, A. Reznikov and E. B. Saff was supported, in part, by National Science Foundation grant DMS-1516400}
\email{edward.b.saff@vanderbilt.edu}
\author{A. Volberg}
\address{Department of Mathematics, Michigan State University}
\email{volberg@math.msu.edu}
\thanks{The research of A. Volberg was supported, in part, by National Science Foundation grant DMS-1600065}
\date{\today}
\begin{abstract}
We investigate separation properties of $N$-point configurations that minimize discrete Riesz $s$-energy on a compact set $A\subset \R^p$. When $A$ is a smooth $(p-1)$-dimensional manifold without boundary and $s\in [p-2, p-1)$, we prove that the order of separation (as $N\to \infty$) is the best possible. The same conclusions hold for the points that are a fixed positive distance from the boundary of $A$ whenever $A$ is any $p$-dimensional set. These estimates extend a result of Dahlberg for certain smooth $(p-1)$-dimensional surfaces when $s=p-2$ (the harmonic case). Furthermore, we obtain the same separation results for `greedy' $s$-energy points. 
We deduce our results from an upper regularity property of the $s$-equilibrium measure (i.e., the measure that solves the continuous minimal Riesz $s$-energy problem), and we show that this property holds under a local smoothness assumption on the set $A$.
\end{abstract}
\maketitle
\section{Introduction}

In this paper we study, respectively, the properties of separation and regularity for minimal discrete and for continuous Riesz energy. For a measure $\mu$ supported on a compact set $A$ in Euclidean space and $s>0$, its {\it Riesz $s$-potential} and {\it Riesz $s$-energy} are defined by
\begin{equation}\label{potendefin}
U_s^\mu(x):=\int_A \frac{\textup{d}\mu(y)}{|x-y|^s}, \;\;\;\; I_s[\mu]:=\int_A U_s^\mu(x)\textup{d}\mu(x), 
\end{equation}
and its {\it Riesz $\log$-potential} and {\it Riesz $\log$-energy} by
$$
U_{\log}^\mu(x):=\int_A \log\frac{1}{|x-y|}\textup{d}\mu(y), \;\;\;\; I_{\log}[\mu]:=\int_A U_{\log}^\mu(x)\textup{d}\mu(x).
$$
The constant $W_s(A):=\inf I_s[\mu]$, where the infimum is taken over all probability measures $\mu$ supported on $A$, is called the {\it $s$-Wiener constant} of the set $A$, and the {\it $s$-capacity} of $A$ is given by
$$
\text{cap}_s(A):=\frac{1}{W_s(A)}, \;\; s>0, \qquad \text{cap}_{\log}(A):=\exp(-W_{\log}(A)).
$$
If $W_s(A)<\infty$, it is known that there exists a unique probability measure $\mu_s$ that attains $W_s(A)$ and we call $\mu_s$ the {\it $s$-equilibrium measure for $A$} (see \cite{Landkof1972}). 

The problem of minimizing $I_s[\mu]$ has a discrete analog. Namely, for an integer $N\geqslant 2$ we set
$$
\mathcal{E}_s(A, N):=\min_{\omega_N\subset A} E_s(\omega_N),
$$
where the infimum is taken over all $N$-point configurations $\omega_N=\{x_1, \ldots, x_N\}\subset A$ and
$$
E_s(\omega_N):=\sli_{i\not = j} \frac{1}{|x_i - x_j|^s}.
$$
By $\omega_N^*=\omega_{N, s}^*=\{x_1^*, \ldots, x_N^*\}$ we denote any {\it optimal $N$-point $s$-energy configuration}; i.e., a configuration that attains $\mathcal{E}_s(A, N)$. 
It is known that if $W_s(A)<\infty$, then 
$$
\frac{1}{N}\sli_{j=1}^N \delta_{x^*_j}\stackrel{*}{\to} \mu_s,
$$
where $\delta_x$ denotes the unit point mass at $x$, and the convergence is in the weak$^*$ topology. Thus, for sets of positive $s$-capacity, by solving the discrete minimization problem, we ``discretize'' the measure $\mu_s$ that solves the continuous problem. 

We shall study properties of $\omega^*_N$, especially its {\it separation distance} given by 
\begin{equation}
\delta(\omega^*_N):=\min_{i\not = j} |x^*_i-x^*_j|.
\end{equation}
In the theory of approximation and interpolation, the separation distance is often associated with some measure of stability of the approximation. In \cite{Dahlberg1978} Dahlberg proved that for a $C^{1+\epsilon}$-smooth $d$-dimensional manifold $A\subset \R^{d+1}$ without boundary and $s=d-1$ (the harmonic case), there exists a constant $c>0$ such that
\begin{equation}\label{eqintro1}
\delta(\omega^*_N)\geqslant cN^{-1/d}, \;\;\; \forall \; N\geqslant 2.
\end{equation}
For such a set $A$, the order $N^{-1/d}$ for separation of $N$-point configurations is best possible\footnote[3]{More generally, this is true for any set $A$ that is lower $d$-regular with respect to some finite measure $\mu$ (see Definition \ref{defregular}).}. For the special case $A=\mathbb{S}^d:=\{x\in \R^{d+1}\colon |x|=1\}$, Kuijlaars, Saff and Sun \cite{Kuijlaars2007} extended Dahlberg's result by proving \eqref{eqintro1} for $s\in [d-1, d)$ and in \cite{Brauchart2014}, Brauchart, Dragnev and Saff extended the range of $s$ to $s\in (d-2, d)$ with explicit values for the constant $c$. Our first goal is to extend the results from \cite{Dahlberg1978} and \cite{Kuijlaars2007} to all $C^\infty$-smooth $d$-dimensional manifolds for $s\in [d-1, d)$ and to interior points of $d$-dimensional bodies for $s\in (d-2, d)$. More generally, we show that \eqref{eqintro1} holds whenever the $s$-equilibrium measure of the manifold is upper regular (see Theorem \ref{corenergysepar}).

Since the problem of determining the minimum $\mathcal{E}_s(A, N)$ requires solving an extremal problem in $N$ variables, it is natural to consider a somewhat simpler discretization method, namely, the computation of {\it greedy $s$-energy points} defined below which involves minimization in only a single variable. For the logarithmic kernel on $A\times A$ where $A\subset\mathbb{C}$, such points were introduced by Edrei \cite{edrei1939} and extensively explored by Leja \cite{leja1957} and his students. For general kernels they were investigated by L\'opez and Saff \cite{Lopezgreedy}.
\begin{defin}\label{defgreedy}
A sequence $\omega^*_\infty=\{a^*_j\}_{j=1}^\infty\subset A$ is called a {\it sequence of greedy $s$-energy points} if $a^*_1\in A$ and for every $N>1$ we have
$$
\sli_{j=1}^{N-1}\frac{1}{|a^*_N-a^*_j|^s} = \inf_{y\in A}\sli_{j=1}^{N-1}\frac{1}{|y-a^*_j|^s}.
$$
\end{defin}
Notice that if $\omega_{N-1}:=\{a^*_1, \ldots, a^*_{N-1}\}$ is already determined, then $a^*_N$ is chosen to minimize $E_s(\omega_{N-1}\cup \{y\})$ over all $y\in A$. It is known \cite{Lopezgreedy} that if $W_s(A)<\infty$ and $\omega_\infty^*=\{a_j^*\}_{j=1}^\infty$ is a sequence of greedy $s$-energy points, then
$$
\frac{1}{N}\sli_{j=1}^N \delta_{a^*_j}\stackrel{*}{\to} \mu_s.
$$
Some computational aspects of using the greedy $s$-energy points for numerical integration can be found in \cite{elefante}. Our second goal, which is achieved in Theorem \ref{corgreedy111} and Corollary \ref{cornonint2}, is to prove that for a smooth $d$-dimensional manifold $A$ and $s\in (d-1, d)$ or $s>d$, there exists a constant $c>0$ such that, for every $i<j$, we have
$$
|a^*_i-a^*_j|\geqslant cj^{-1/d}.
$$
In particular, this implies that $\delta(\{a_1^*, \ldots, a_N^*\})\geqslant cN^{-1/d}$. Moreover, when $s>d$ we also prove that for some constant $C>0$ the {\it covering radius} $\eta$ for such point satisfies
$$
\eta(\{a_1^*, \ldots, a_N^*\}, A):=\max_{y\in A} \min_{j=1,\ldots, N} |y-a^*_j|\leqslant CN^{-1/d}.
$$
For configurations that attain the minimal discrete energy $\mathcal{E}_s(A, N)$, this was done in \cite{Hardin2012} for $s>d$ and in \cite{Dahlberg1978} for $s=d-1$.

Since the method of proof for the above results utilizes the regularity properties of the measure $\mu_s$ (see Definition \ref{defregular}), our third goal is to obtain sufficient conditions for this regularity. As we show in Theorem \ref{thequil}, compact $C^\infty$-smooth $d$-dimensional manifolds $A\subset \R^{d+1}$ without boundary satisfy our conditions (we anticipate, however, that the same result holds for $C^2$-smooth manifolds). In the case $s=d-1$, such a result is proved in \cite{sjogren1972}. Another result of this type was proved in \cite{Wallin1966} under an assumption that the potential $U_s^\mu$ of the measure $\mu$ satisfies an appropriate H\"older condition in the whole space $\R^{d+1}$.
We derive our result, Theorem \ref{thequil}, using only smoothness of the manifold $A$ by applying the theory of pseudo-differential operators. 

The paper is organized as follows. The main results in the integrable case, which include separation properties of minimal energy and greedy energy points, are stated in Section \ref{secmainintegr} and proved in Sections \ref{sectionproofs1} and \ref{sectionproofs2}. In Section \ref{secmainnonint} we state the separation and covering properties of greedy energy points in the non-integrable case, which are proved in Subsections \ref{proofnonint1} and \ref{proofnonint2}. In Section \ref{factspotential} we cite some known results from potential theory that we need to prove our main results, and in Section \ref{factspseudo} we give a short introduction to the theory of pseudo-differential operators, which we need for the proof of Theorem \ref{thequil} in Section \ref{sectionproofs2}.

\section{Main results in the integrable case}\label{secmainintegr}
In this section we state and discuss our main results for integrable Riesz kernels. Their proofs are given in Sections \ref{sectionproofs1} and \ref{sectionproofs2}. We shall work primarily with a class of $\ell$-regular sets, which are defined as follows.
\begin{defin}\label{defregular}
A compact set $A$ is called {\it $\ell$-regular}, $\ell>0$, if for some measure $\lambda$ supported on $A$ there exists a positive constant $C$ such that for any $x\in A$ and $r<\diam(A)$ we have
$$
C^{-1}r^\ell \leqslant \lambda(B(x,r)) \leqslant Cr^\ell,
$$
where $B(x,r)$ denotes the open ball $B(x,r):=\{y\in \R^p\colon |y-x|<r\}$. The set $A$ is called {\it $\ell$-regular at $x\in A$} if for some positive number $r_1$, the set $A\cap B(x, r_1)$ is $\ell$-regular.

Further, we call a measure $\mu$ {\it upper $d$-regular at $x$} if for some constant $c(x)$ and any $r>0$ we have
\begin{equation}\label{defupperregular}
\mu(B(x,r))\leqslant c(x)r^d.
\end{equation}
\end{defin}
As the next example shows, a set $A$ can be $\ell$-regular with $\ell\in \mathbb{N}$, but its $s$-equilibrium measure $\mu_s$ can be $d$-regular with $d<\ell$. 
\begin{example}\label{exampleee}
For the closed unit ball $\mathbb{B}^\ell:=\{x\in \R^\ell\colon |x|\leqslant 1\}$, which is $\ell$-regular, and $s\in (\ell-2, \ell)$ the $s$-equilibrium measure is given by (see, e.g., \cite{Landkof1972} or \cite{Borodachov2016})
$$
\textup{d}\mu_s=M(1-|x|^2)^{(s-\ell)/2}\textup{d}x, \;\;\;\;\; M=\frac{\Gamma(1+s/2)}{\pi^{\ell/2}\Gamma(1+(s-\ell)/2)}.
$$
We notice that $\mu_s$ is $\ell$-regular at every interior point of $\mathbb{B}^\ell$.
However, for $x$ on the boundary $\partial \mathbb{B}^\ell=\mathbb{S}^{\ell-1}$, the measure $\mu_s$ satisfies 
$$
C^{-1}r^{(\ell+s)/2}\leqslant \mu_s(B(x, r))\leqslant Cr^{(\ell+s)/2},
$$
so that $\mu_s$ is not $\ell$-regular at $x\in \partial \mathbb{B}^\ell$. 
\end{example}

We now present our main results which include the possibility of different regularities for the set $A$ and the measure $\mu_s$. Although stated only for $s>0$, they remain valid for $\ell=1$ and $s=\log$.
\begin{theorem}\label{corenergysepar}
Let $A\subset \R^p$ be a compact set of positive $s$-capacity, $0\leqslant p-2<s<d\leqslant \ell\leqslant p$, and $\mu_s$ be the $s$-equilibrium measure on $A$. Assume $A$ is $\ell$-regular at every $x\in A'\subset A$ and $\mu_s$ is upper $d$-regular at every $x\in A'$ with $\sup_{x\in A'}c(x)\leqslant c$ for some $c>0$. Then there exists a positive constant $C$ such that for any optimal $N$-point $s$-energy configuration $\omega^*_N=\{x^*_1, \ldots, x^*_N\}$, any $x^*_j \in A'$ and any $x^*_k\in A$ with $k\not=j$ we have
\begin{equation}\label{zlatovlaska}
|x^*_j-x^*_k|>CN^{-1/d}.
\end{equation}
\end{theorem}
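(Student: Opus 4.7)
My plan is a variational (swap) argument combined with a shell-wise counting estimate that leverages both the $\ell$-regularity of $A$ and the upper $d$-regularity of $\mu_s$ at $x_j^*\in A'$. The starting point is the optimality of $\omega_N^*$: fixing all other points, $x_j^*$ must minimize $y\mapsto U_s^{\omega_N^*\setminus\{x_j^*\}}(y)$ on $A$, which gives
\[
U_s^{\omega_N^*\setminus\{x_j^*\}}(x_j^*)\leq U_s^{\omega_N^*\setminus\{x_j^*\}}(y),\qquad y\in A.
\]
Integrating against $\mu_s$ and using Frostman's maximum principle $U_s^{\mu_s}\leq W_s(A)$ on $A$ (a standard fact to be collected in the forthcoming Section~\ref{factspotential}) yields the global potential bound
\[
\sum_{k\neq j}\frac{1}{|x_j^*-x_k^*|^s}\leq (N-1)W_s(A),
\]
which alone only gives the weak separation $r:=\min_{k\neq j}|x_j^*-x_k^*|\geq CN^{-1/s}$, insufficient since $s<d$.

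To upgrade to $r\geq CN^{-1/d}$, the crucial input should be the uniform shell-wise counting estimate
\[
n_t:=\#\{k\neq j:|x_k^*-x_j^*|\leq t\}\leq CNt^d,\qquad t\in(0,t_0],
\]
for some $t_0>0$ depending only on $A'$. Granted this, a Stieltjes integration by parts (whose boundary term at $0$ vanishes because $d>s$) gives
\[
\frac{1}{r^s}\leq\int_0^r\frac{dn_t}{t^s}=\frac{n_r}{r^s}+s\int_0^r\frac{n_t}{t^{s+1}}\,dt\leq C'Nr^{d-s},
\]
equivalently $r^d\geq (C'N)^{-1}$, which is the asserted separation.

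To produce the counting estimate I would apply a localized form of the variational inequality at each scale $t$, averaging it against a probability measure $\nu_t$ supported on $B(x_j^*,t)\cap A$ and constructed from the $\ell$-regular measure $\lambda$ provided by the $\ell$-regularity of $A$ at $x_j^*$. The condition $s<\ell$ ensures that the $s$-potential of $\nu_t$ is controlled by $O(t^{-s})$, while the upper $d$-regularity of $\mu_s$ together with $s<d$ controls $\int_{B(x_j^*,t)}|y-x|^{-s}\,d\mu_s(y)\lesssim t^{d-s}$ near $x_j^*$ via a dyadic decomposition around $x$. Comparing discrete and continuous potentials through these localized bounds should force $n_t$ to inherit the upper $d$-regularity of $\mu_s$.

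The main obstacle I anticipate is precisely this last step: a naive application of the variational inequality yields only $n_t\lesssim Nt^s$ (from the trivial bound $n_t/t^s\leq CN$), and genuinely upgrading the exponent from $s$ to $d$ requires the $d$-regularity of $\mu_s$ in a non-trivial way---most plausibly by a bootstrapping/iteration across dyadic scales, or by a direct potential-theoretic comparison of the discrete minimizer with the continuous equilibrium measure $\mu_s$ in the spirit of the results cited from \cite{Landkof1972}.
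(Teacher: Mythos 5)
You correctly set up the variational inequality for $x_j^*$, correctly derive the bound $\sum_{k\neq j}|x_j^*-x_k^*|^{-s}\leq (N-1)W_s(A)$ by integrating against $\mu_s$ and applying the maximum principle, and correctly diagnose that this alone gives only $N^{-1/s}$. However, your proposal has a genuine gap at exactly the point you flag: the shell-wise counting estimate $n_t\leq C N t^d$ is not established, and it is in fact the hard part. It is a quantitative upper $d$-regularity statement about the empirical measure $\frac1N\sum_k\delta_{x_k^*}$ near $x_j^*$, which is at least as strong as the separation you want (separation plus $\ell$-regularity of $A$ only gives a packing bound of order $N^{\ell/d}t^\ell$, which is weaker than $Nt^d$ when $\ell>d$), so your reduction essentially assumes a harder result. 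You gesture at a ``bootstrapping/iteration across dyadic scales'' but do not produce one, and it is not clear that one exists at this level of generality.

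The paper takes a different and more economical route, via Theorem~\ref{dregularimpliessepar}. Rather than trying to control the local point count, it lifts $y^*$ perpendicularly into the ambient space by $r=N^{-1/d}$, producing $y_r^*$ with $|y_r^*-x_k|^2=|y^*-x_k|^2+r^2\geq r^2$. The domination principle propagates the lower bound $U_s^{\nu(\omega_N)}\geq \frac{\gamma_N}{N W_s(A)}U_s^{\mu_s}$ from $A$ to the off-surface point $y_r^*$, and Lemma~\ref{potentialsecondterm} shows that upper $d$-regularity of $\mu_s$ forces the equilibrium potential to drop by at most $c_1 c(y^*)r^{d-s}$ under this perpendicular displacement. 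Comparing $\gamma_N-c_1c(y^*)N^{s/d}\leq \sum_k |y_r^*-x_k|^{-s}\leq \gamma_N-|y^*-x_1|^{-s}+N^{s/d}$ then yields the separation directly, with no counting argument needed. The upper $d$-regularity of $\mu_s$ enters through a single perturbation lemma for the continuous potential rather than through a discrete equidistribution estimate, which is why the proof closes. To salvage your approach you would need to first prove the equidistribution estimate $n_t\lesssim Nt^d$, which is a separate (and in this paper, unaddressed) problem.
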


In particular, \eqref{zlatovlaska} holds in the following cases (see Corollaries \ref{thequilbodies} and \ref{corsupersmooth} and Example \ref{exampleee}):
\begin{itemize}
\item $A\subset \R^{\ell+1}$ is a compact $\ell$-regular $C^\infty$-smooth manifold without boundary, $s\in [\ell-1, \ell)$, and $A'=A$ with $d=\ell$;
\item $A\subset \R^\ell$ is compact, $s\in (\ell-2, \ell)$, and $A'=\{x\in A\colon \dist(x, \partial A)\geqslant \ep\}$ with $\ep>0$ and $d=\ell$;
\item $A=\mathbb{B}^{\ell}$, $s\in (\ell-2, \ell)$, and $A'=\{x\in \R^{\ell}\colon |x|\leqslant 1-\ep\}$ with $\ep\in(0,1)$ and $d=\ell$;
\item $A=\mathbb{B}^\ell$, $s\in (\ell-2, \ell)$, and $A'=\partial \mathbb{B}^\ell$ with $d=(s+\ell)/2$.
\end{itemize}

\begin{rem*}
In the case $\ell=1$ and $s=\log$, our results imply the sharp estimate that when $x^*_j = \pm 1$ and $x^*_k\not=x_j^*$,
\begin{equation}\label{sharp12345}
|x^*_k - x^*_j|\geqslant cN^{-2}.
\end{equation}
Indeed, in this case the optimal $\log$-energy configurations $\omega_N^*$ consist of {\it Fekete points}; i.e., the roots of $(1-x^2)P'_{N-1}(x)$, where $P_{N}$ is the $N$th degree Legendre polynomial (see, e.g., \cite{szego}), for which it is known that \eqref{sharp12345} cannot be improved for $x_k^*$ near $\pm 1$.

\end{rem*}
The next theorem concerns greedy energy points defined in Definition \ref{defgreedy}. 

\begin{theorem}\label{corgreedy111}
Let $A\subset \R^{\ell+1}$ be a compact $C^\infty$-smooth $\ell$-dimensional manifold without boundary, $\ell-1\leqslant s<\ell$. If $\omega^*_\infty = \{a^*_j\}_{j=1}^\infty$ is a sequence of greedy $s$-energy points on $A$, then there exists a positive constant $c(A, s)$ such that, for any $i<j$,
$$
|a^*_i - a^*_j|\geqslant c(A, s)j^{-1/\ell}.
$$
\end{theorem}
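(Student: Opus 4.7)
The plan is to follow the scheme of Theorem \ref{corenergysepar}, with the greedy minimizing property of $a_j^*$ playing the role of the global minimality of $\omega_N^*$. By Theorem \ref{thequil}, the equilibrium measure $\mu_s$ is upper $\ell$-regular on $A$ (in fact $\ell$-regular), and since $A$ is a smooth compact manifold without boundary, $U_s^{\mu_s}$ equals $W_s(A)$ at every point of $A$ (not merely quasi-everywhere).

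The first step is to derive the greedy analogue of the key inequality used in the minimal energy case. By Definition \ref{defgreedy}, $a_j^*$ minimizes $U_s^{\omega_{j-1}^*}$ on $A$, so for any probability measure $\nu$ on $A$,
$$
U_s^{\omega_{j-1}^*}(a_j^*) \leqslant \int_A U_s^{\omega_{j-1}^*}(y)\,\textup{d}\nu(y) = \sum_{k=1}^{j-1}U_s^\nu(a_k^*).
$$
Taking $\nu=\mu_s$ and using $U_s^{\mu_s}\equiv W_s(A)$ on $A$ gives
$$
U_s^{\omega_{j-1}^*}(a_j^*) \leqslant (j-1)\,W_s(A),
$$
which is the precise analogue of the bound exploited in the proof of Theorem \ref{corenergysepar}.

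The second step is an induction on $j$. The base case is immediate from compactness of $A$. For the inductive step, assume $|a_k^*-a_m^*|\geqslant c\,m^{-1/\ell}$ for all $k<m\leqslant j-1$, so that $\omega_{j-1}^*$ is $\delta_{j-1}$-separated with $\delta_{j-1}:=c(j-1)^{-1/\ell}$. Fix $i<j$, write $\delta:=|a_j^*-a_i^*|$, and suppose for contradiction that $\delta<c_0\,j^{-1/\ell}$ for a small constant $c_0$ to be chosen. The $\ell$-regularity of $A$ then yields a packing bound: the number of points of $\omega_{j-1}^*$ in $B(a_j^*,r)$ is at most a constant multiple of $(r/\delta_{j-1})^\ell$ for $r\geqslant\delta_{j-1}$, and is exactly one (namely $a_i^*$) for $r<\delta_{j-1}/2$ once $c_0$ is small enough. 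A shell decomposition of the sum $U_s^{\omega_{j-1}^*}(a_j^*)=\sum_{k<j}|a_j^*-a_k^*|^{-s}$, combined with the upper bound of the first step, is designed to force $\delta\geqslant c\,j^{-1/\ell}$, contradicting the assumption and closing the induction.

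The principal obstacle is the final shell-sum step: the naive estimate $1/\delta^s\leqslant(j-1)W_s(A)$ produces only $\delta\geqslant c\,j^{-1/s}$, which is weaker than the claim since $s<\ell$. To recover the correct exponent one must extract genuinely more information than the single worst term from the inequality of step one. A natural route is to replace $\nu=\mu_s$ by a localized, normalized restriction of $\mu_s$ to a ball of radius comparable to $\delta_{j-1}$ centered near $a_j^*$, using the upper and lower $\ell$-regularity of $\mu_s$ from Theorem \ref{thequil} to dominate the contribution of each $a_k^*$ uniformly by $O(\delta_{j-1}^{-s})$ rather than $W_s(A)$. The inductive packing of the other $j-2$ points is then used to evaluate the corresponding shell sums, so that the singular contribution of $a_i^*$ is isolated and yields the desired $j^{-1/\ell}$ separation. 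This refinement, which critically uses the $\ell$-regularity of $\mu_s$ supplied by Theorem \ref{thequil}, is the core technical content distinguishing the greedy argument from the naive one and is exactly the greedy counterpart of the delicate step in the proof of Theorem \ref{corenergysepar}.
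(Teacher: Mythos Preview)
Your Step~1 is correct and coincides with the paper's starting observation. After that, however, your route diverges from the paper's and contains a genuine gap.

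The paper does \emph{not} argue by induction, packing, or localized test measures. Both Theorems~\ref{corenergysepar} and~\ref{corgreedy111} are obtained in one line from Theorem~\ref{dregularimpliessepar}, applied with $y^*=a_j^*$ and $\omega_N=\{a_1^*,\dots,a_{j-1}^*\}$ (so $N=j-1$). That theorem requires \emph{no} hypothesis on the separation of $\omega_N$; its proof lifts $y^*$ to $y^*_r:=(y^*,r)\in\R^{p+1}$ with $r=N^{-1/d}$, invokes the domination principle for Riesz potentials to carry the inequality $U_s^{\nu(\omega_N)}\geqslant \frac{\gamma_N}{NW_s(A)}U_s^{\mu_s}$ from $A$ to all of space, and uses Lemma~\ref{potentialsecondterm} (this is precisely where upper $d$-regularity of $\mu_s$ enters) to control $U_s^{\mu_s}(y^*)-U_s^{\mu_s}(y^*_r)$. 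A short comparison of the lifted potential with the original one then gives $|y^*-x_k|^{-s}\leqslant (c_1 c(y^*)+1)N^{s/d}$ for every $k$. No restriction of $\mu_s$ to a small ball and no shell sums appear.

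The gap in your scheme is that the induction does not close. If one carries out the localized-measure idea carefully---take $\nu$ to be $\mu_s$ restricted to $B(a_j^*,R\delta_{j-1})$ and normalized, subtract matching terms, and bound $|U_s^\nu(a_k^*)-\rho_k^{-s}|\lesssim R\,\delta_{j-1}\,\rho_k^{-s-1}$ together with the packing estimate---one obtains after optimizing in $R$ only
\[
|a_j^*-a_i^*|\ \geqslant\ C_0\,\delta_{j-1}\ =\ C_0\,c\,(j-1)^{-1/\ell},
\]
with a constant $C_0=C_0(A,s)$ over which you have no control. The inductive step needs $C_0 c(j-1)^{-1/\ell}\geqslant c\,j^{-1/\ell}$, i.e.\ $C_0\geqslant (j/(j-1))^{1/\ell}\to 1$, so it fails whenever $C_0<1$; bootstrapping from the trivial bound $\delta\gtrsim j^{-1/s}$ reproduces the same exponent and does not help. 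Your last paragraph asserts that the missing refinement is ``exactly the greedy counterpart of the delicate step in the proof of Theorem~\ref{corenergysepar}'', but the delicate step there is the lifting/domination argument of Theorem~\ref{dregularimpliessepar}, not a localization on $A$, and it is exactly what lets the paper avoid any induction.
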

Theorems \ref{corenergysepar} and \ref{corgreedy111} are immediate consequences of Theorem \ref{dregularimpliessepar} stated below and the following trivial observation: if $\omega^*_N=\{x^*_1, \ldots, x^*_N\}$ is an optimal $N$-point $s$-energy configuration, then for any $k=1,\ldots, N$ we have
$$
\sum_{j\not = k} \frac{1}{|x^*_k-x^*_j|^s} = \inf_{y\in A}\sum_{j\not = k} \frac{1}{|y-x^*_j|^s}.
$$

\begin{theorem}\label{dregularimpliessepar}
Let $A\subset \R^p$ be a compact set of positive $s$-capacity and $\mu_s$ be the $s$-equilibrium measure on $A$. Let $\omega_N=\{x_1, \ldots, x_N\}$ be any $N$-point configuration in $A$, and $y^*\in A$ satisfy\footnote[2]{The right-hand side of \eqref{suddenlypolar} is called the {\it $s$-polarization} (see, e.g., \cite{borodachov2016optimall}) of $\omega_N$.}
\begin{equation}\label{suddenlypolar}
\sum_{j=1}^N \frac{1}{|y^*-x_j|^s} = \inf_{y\in A}\sum_{j=1}^N \frac{1}{|y-x_j|^s}.
\end{equation}
If \,$0\leqslant p-2<s<d\leqslant \ell\leqslant p$, $A$ is $\ell$-regular at $y^*$ and $\mu_s$ is upper $d$-regular at $y^*$, then for every $j=1, \ldots, N$
\begin{equation}\label{antoshka}
|y^*-x_j|\geqslant (c_1 c(y^*)+1)^{-1/s}\cdot N^{-1/d},
\end{equation}
where the constant $c(y^*)$ is from \eqref{defupperregular} and the positive constant $c_1$ depends only on $A$ and $s$.
\end{theorem}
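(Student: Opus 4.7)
The goal is to establish the upper bound $F(y^*) \leq (c_1 c(y^*) + 1)\, N^{s/d}$, where $F(y) := \sum_{j=1}^N |y-x_j|^{-s}$, since the trivial inequality $|y^*-x_j|^{-s} \leq F(y^*)$ then yields the claim after extracting $s$-th roots. The defining minimizing property of $y^*$ gives the polarization inequality $F(y^*) \leq \int_A F\,d\nu = \sum_{j=1}^N U_s^\nu(x_j)$ for any probability measure $\nu$ on $A$.

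I would take $\nu$ to be a suitably normalized restriction of the $\ell$-regular measure $\lambda$ from Definition \ref{defregular} to $B(y^*,\rho)\cap A$, with the critical scaling $\rho$ of order $N^{-1/d}$. The lower $\ell$-regularity gives $\lambda(B(y^*,\rho)) \geq C^{-1}\rho^{\ell}$, ensuring the normalization is controlled, and a layer-cake computation using the upper $\ell$-regularity $\lambda(B(x,r)) \leq Cr^\ell$ yields the uniform bound $U_s^\nu(x_j) \leq C\rho^{-s}$ for every $j$. To then improve the resulting naive bound $F(y^*) \leq CN\rho^{-s} = CN^{1+s/d}$ to the target $N^{s/d}$ scaling, I would split the sum $\sum_j U_s^\nu(x_j)$ based on whether $x_j \in B(y^*,2\rho)$ or not, and invoke the upper $d$-regularity $\mu_s(B(y^*,\rho)) \leq c(y^*)\rho^d$ combined with the Frostman bound $U_s^{\mu_s}(x_j) \leq W_s$ on $A$ (valid since $x_j \in A$ and $\mu_s$ is the equilibrium measure on $A$) to control the ``near'' contribution; the linear appearance of $c(y^*)$ in the coefficient is precisely the weight produced by this $d$-regular penalty.

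The main obstacle lies in this last step: bridging the polarization property of $y^*$ with the $d$-regular structure of $\mu_s$ to achieve the $N^{s/d}$ scaling rather than the weaker $N$ or $N^{1+s/d}$ orders. Polarization against $\mu_s$ alone gives only $F(y^*) \leq NW_s$, corresponding to separation of order $N^{-1/s}$, which is weaker than $N^{-1/d}$ when $s < d$; the refinement through the combined test measure and the tuning $\rho \sim N^{-1/d}$ is what produces both the sharp $N^{-1/d}$ order separation and the precise linear dependence on $c(y^*)$ indicated by the coefficient $c_1 c(y^*)+1$.
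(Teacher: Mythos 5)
Your opening claim—that it suffices to show $F(y^*)\leqslant (c_1 c(y^*)+1)N^{s/d}$—is the fatal flaw, and it cannot be repaired. For the configurations where the theorem has content (optimal or greedy points, say on a sphere), $\gamma_N=F(y^*)$ is of order $N$, not $N^{s/d}$; indeed \eqref{erdest} gives $\gamma_N\leqslant W_s(A)N$, and for discrete minimizers $\gamma_N/N\to W_s(A)$. Since $s<d$ here, $N^{s/d}\ll N$, so the quantity you propose to bound by $N^{s/d}$ actually grows like $N$. The polarization inequality $F(y^*)\leqslant\int_A F\,\textup{d}\nu=\sum_j U_s^\nu(x_j)$ is perfectly valid, but its right-hand side is also at least of order $N$ for any probability measure $\nu$ (the far terms alone contribute that much), so no choice of $\nu$ and no near/far decomposition can push the bound below order $N$. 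The conclusion of the theorem is about the \emph{largest single term}, not the sum, and an absolute estimate on $F(y^*)$ is the wrong target.

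The paper's argument avoids this trap by working with a \emph{difference} rather than an absolute bound. It lifts $y^*$ orthogonally off the set by $r=N^{-1/d}$ to $y^*_r$, and then compares $\sum_j|y^*-x_j|^{-s}$ with $\sum_j|y^*_r-x_j|^{-s}$. Both sums are of order $N$, but their difference is only of order $N^{s/d}$: the lower bound $\sum_j|y^*_r-x_j|^{-s}\geqslant\gamma_N-c_1c(y^*)N^{s/d}$ comes from the domination principle (which requires $s\in(p-2,p)$ and is what lets one evaluate the potential inequality $U_s^{\nu(\omega_N)}\geqslant\frac{\gamma_N}{NW_s(A)}U_s^{\mu_s}$ at the off-set point $y^*_r$), together with Lemma \ref{potentialsecondterm}, which uses the upper $d$-regularity of $\mu_s$ to control the drop $U_s^{\mu_s}(y^*)-U_s^{\mu_s}(y^*_r)\leqslant c_1c(y^*)r^{d-s}$; the matching upper bound uses that $|y^*_r-x_j|\geqslant|y^*-x_j|$ for all $j$ and $|y^*_r-x_j|\geqslant r$. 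Subtracting the two inequalities isolates the one term $|y^*-x_1|^{-s}$ and shows it is $\leqslant(c_1c(y^*)+1)N^{s/d}$. Neither the lift off the manifold, the domination principle, nor Lemma \ref{potentialsecondterm} appears in your sketch, and your plan of splitting $\sum_j U_s^\nu(x_j)$ into near and far contributions would still need an a priori bound on the number of $x_j$ in $B(y^*,2\rho)$—which is essentially what the theorem is trying to prove.
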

Our next goal is to present a sufficient condition for Theorem \ref{dregularimpliessepar} to hold. We begin with the following definition.
\begin{defin}
Let $A\subset \R^p$ be a compact set $d$-regular at a point $x_0\in A$. We say that $A$ is {\it $(d, C^\infty)$-smooth at $x_0$} if there exists a positive number $r_0$ and a $C^\infty$-smooth invertible function $\varphi\colon B(x_0, r_0)\cap A \to \R^d$ such that $\vf(B(x_0, r_0)\cap A)$ is open in $\R^d$ and $\varphi^{-1}$ is also $C^\infty$-smooth.
\end{defin}
Our next theorem is a local result showing that if a manifold is $C^\infty$-smooth at a point, then the $s$-equilibrium measure is upper $d$-regular at this point.
\begin{theorem}\label{thequil}
Let $A\subset \R^{p}$ be a compact set of positive $s$-capacity, where  $p\in \{d, d+1\}$ and $s\in [p-2, d)$, and $\mu_s$ be the $s$-equilibrium measure on $A$. If $A$ is $(d, C^\infty)$-smooth at a point $x_0\in A$, then $\mu_s$ is upper $d$-regular at $x_0$; i.e., inequality \eqref{defupperregular} holds for any $r>0$.
\end{theorem}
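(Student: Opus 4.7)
The plan is to localize via the chart $\varphi$, rewrite the equilibrium condition as an elliptic pseudo-differential equation on $\R^d$, and conclude via a parametrix argument that the pulled-back measure has a smooth density. We may assume $x_0\in\supp(\mu_s)$, since otherwise $\mu_s(B(x_0,r))=0$ for small $r$ and the estimate is trivial. Let $\varphi\colon U\cap A\to \Om\subset \R^d$ be the $C^\infty$ chart supplied by hypothesis, choose $V$ open with $x_0\in V$ and $\overline V\subset U$, and set $\nu:=\varphi_{*}(\mu_s|_{U\cap A})$. Splitting the equilibrium potential on $\supp(\mu_s)\cap V$ into local and far-field parts gives
\begin{equation*}
\int_{U\cap A}\frac{d\mu_s(y)}{|x-y|^s} = W_s(A)-h(x), \qquad h(x):=\int_{A\setminus U}\frac{d\mu_s(y)}{|x-y|^s},
\end{equation*}
with $h$ of class $C^\infty$ in a neighborhood of $x_0$ because the integration is carried out at a positive distance from $V$. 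Transporting to $\Om$ via $\varphi$ yields an equation $T\nu = g$ on $\varphi(V\cap A)\cap\supp\nu$, where $g\in C^\infty(\varphi(V))$ and
\begin{equation*}
(T\nu)(u) := \int_\Om \frac{d\nu(v)}{|\varphi^{-1}(u)-\varphi^{-1}(v)|^s}.
\end{equation*}

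The main analytic step is to recognize $T$ as a classical elliptic pseudo-differential operator on $\Om$ of order $s-d$. Using the Taylor expansion
\begin{equation*}
|\varphi^{-1}(u)-\varphi^{-1}(v)|^{2} = Q_u(u-v) + O(|u-v|^{3}),
\end{equation*}
where $Q_u$ is a smooth family of positive-definite quadratic forms (the Euclidean metric pulled back through $D\varphi^{-1}(u)$, treating the cases $p=d$ and $p=d+1$ uniformly), the kernel of $T$ has the principal diagonal singularity $Q_u(u-v)^{-s/2}$. Symbol calculus as recalled in Section \ref{factspseudo} then identifies the principal symbol of $T$ as a positive multiple of $\langle Q_u^{-1}\xi,\xi\rangle^{(s-d)/2}$, which is nowhere vanishing, i.e.\ elliptic. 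A properly supported parametrix $S$ of order $d-s$ therefore exists with $ST=\mathrm{Id}+R$ and $R$ smoothing, giving the representation $\nu = Sg - R\nu$ as distributions on a neighborhood $\Om'\ni\varphi(x_0)$. Since $Sg\in C^\infty(\Om')$ and $R\nu\in C^\infty(\Om')$, the measure $\nu$ has a smooth, hence locally bounded, density on $\Om'$. Pulling back by $\varphi$ yields a bounded density of $\mu_s$ near $x_0$ relative to the $d$-regular surface measure on $V\cap A$, and the desired bound $\mu_s(B(x_0,r))\leq c(x_0)\,r^d$ follows (absorbing the trivial large-$r$ case by enlarging the constant).

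The principal obstacle is the rigorous passage from the quasi-everywhere identity $U_s^{\mu_s}=W_s(A)$ on $\supp\mu_s$ to a genuine distributional identity $T\nu=g$ on an open subset of $\Om$, which is what the parametrix argument actually requires. The hypothesis $s\in[p-2,p)$ provides the Frostman inequality $U_s^{\mu_s}\leq W_s(A)$ throughout $\R^p$, and the non-negative defect $W_s(A)-U_s^{\mu_s}$ vanishes on $\supp\mu_s$ modulo an $s$-polar (hence $\mu_s$-null) set. Combining these facts with the local $C^\infty$-smoothness at $x_0$ and the equilibrium characterization of $\mu_s$ restricted to a small neighborhood is what should allow the required open-set equation to be extracted, enabling the pseudo-differential machinery to act; making this extension step precise is the heart of the proof.
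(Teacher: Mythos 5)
Your high-level strategy---localize with the chart, derive a pseudo-differential equation, apply a parametrix---is the same as the paper's, and your framing of the problem is correct. But there are two genuine gaps, one of which you explicitly flag as unresolved.

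The extension step you call ``the heart of the proof'' is precisely what the paper closes by invoking Theorem~\ref{coralwaysequal}: since $A$ is $d$-regular at $x_0$ and $s<d$, the quasi-everywhere inequality $U_s^{\mu_s}\geqslant W_s(A)$ upgrades to a pointwise inequality \emph{everywhere} on $A\cap B(x_0,r_1)$, and combining with the Frostman bound $U_s^{\mu_s}\leqslant W_s(A)$ (valid on all of $\R^p$ for $s\in[p-2,p)$, Theorem~\ref{landkofblabla}) yields $U_s^{\mu_s}\equiv W_s(A)$ on a full relatively open neighborhood of $x_0$ in $A$---not merely on $\supp\mu_s$ modulo a polar set. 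Without this, the equation $T\nu=g$ only holds on $\supp\nu$ minus a polar set, which is not enough for the parametrix argument; your proposal correctly identifies the obstruction but does not supply the needed regularity theorem from potential theory.

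The second issue is that you assert the full operator $T$ (with exact kernel $|\varphi^{-1}(u)-\varphi^{-1}(v)|^{-s}$) is a \emph{classical} elliptic pseudo-differential operator and then conclude $\nu\in C^\infty$ from elliptic regularity. That is true, but justifying the polyhomogeneous expansion of this kernel is additional work that Section~\ref{factspseudo} does not provide. The paper takes a coarser and more economical route: it Taylor-expands $|\psi(\tilde y)-\psi(\tilde x)|^{-s}$ to peel off only the leading homogeneous term $|a(\tilde x)(\tilde y-\tilde x)|^{-s}$, computes its symbol explicitly (via the Fourier transform of $|z|^{-s}$), and controls the remainder $w_4$ just well enough to place $\int w_4\,\textup{d}\nu$ in $W^{1,\infty}$. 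Feeding $w\in W^{1,\infty}_0$ through the parametrix $Q$ of order $d-s$ and the Sobolev embedding (Theorem~\ref{sobolevembeddind}, using $s<d$) gives $\nu\in L^\infty$ locally, which already suffices for upper $d$-regularity. Your argument, if completed, would give a stronger $C^\infty$ conclusion, but at the cost of proving more about the operator; the paper settles for less about the density and correspondingly needs less about $T$.
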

Example \ref{exampleee} illustrates the sharpness of this theorem. We note that if $y^*$ is as in \eqref{suddenlypolar} and the assumptions of Theorem \ref{thequil} hold with $x_0$ replaced by $y^*$, then the conclusion of Theorem \ref{dregularimpliessepar} follows. 

The next corollary follows from Theorem \ref{thequil} and the fact that, if $p=d$, then $A$ is $(p, C^\infty)$-smooth at $x_0\in A$ if and only if $x_0$ is an interior point of $A$.
\begin{corollary}\label{thequilbodies}
Let $A\subset \R^d$ be compact, $s\in [d-2, d)$ and $x_0$ be an interior point of $A$. If $\mu_s$ is the $s$-equilibrium measure on $A$, then $\mu_s$ is upper $d$-regular at $x_0$.
\end{corollary}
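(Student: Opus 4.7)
The plan is to reduce the corollary directly to Theorem \ref{thequil} by showing that, when $p=d$, any interior point of $A$ is automatically a point at which $A$ is $(d,C^\infty)$-smooth in the sense of the definition preceding Theorem \ref{thequil}. Once this reduction is established, the exponent ranges match: Theorem \ref{thequil} permits $s\in[p-2,d)$, which with $p=d$ is precisely the assumed range $s\in[d-2,d)$.

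To carry out the reduction, I would fix $r_0>0$ small enough that $\overline{B(x_0,r_0)}\subset A$, so that $A\cap B(x_0,r_0)=B(x_0,r_0)$. First I verify the $d$-regularity prerequisite: taking $\lambda$ to be Lebesgue measure restricted to $B(x_0,r_0)$, for any $y\in B(x_0,r_0)$ and any $r<\diam B(x_0,r_0)=2r_0$, the standard two-sided estimate
\[
c^{-1}r^d\leqslant \lambda(B(y,r))\leqslant c\, r^d
\]
holds, since $B(y,r)\cap B(x_0,r_0)$ always contains a Euclidean ball of radius comparable to $\min(r,r_0)\geqslant r/2$. Next, to exhibit the chart required by the definition, I take $\varphi$ to be the identity map $\varphi\colon B(x_0,r_0)\cap A=B(x_0,r_0)\to\R^d$; its image is open in $\R^d$, and $\varphi$ and $\varphi^{-1}$ are trivially $C^\infty$-smooth. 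This confirms that $A$ is $(d,C^\infty)$-smooth at $x_0$.

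With smoothness in hand, I apply Theorem \ref{thequil} with $p=d$ and the given $s\in[d-2,d)$, which yields a constant $c(x_0)>0$ such that $\mu_s(B(x_0,r))\leqslant c(x_0)r^d$ for every $r>0$; this is exactly the upper $d$-regularity of $\mu_s$ at $x_0$ in the sense of \eqref{defupperregular}.

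The main conceptual obstacle has already been absorbed into Theorem \ref{thequil}, whose proof uses the theory of pseudo-differential operators. The only potentially subtle point in the present corollary is confirming that the internal $d$-regularity requirement hidden in the definition of $(d,C^\infty)$-smoothness is fulfilled at an interior point, but once $r_0$ is chosen so that the ball $B(x_0,r_0)$ lies inside $A$ the verification above is immediate. (The converse direction of the equivalence mentioned in the text — that a $(d,C^\infty)$-smooth point of $A\subset\R^d$ must be an interior point — is not needed here, though it would follow from Brouwer's invariance of domain.)
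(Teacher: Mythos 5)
Your proposal is correct and follows exactly the paper's route: the paper's proof is the single observation that when $p=d$, a point $x_0\in A$ is $(d,C^\infty)$-smooth if and only if it is interior, and you have simply spelled out the forward implication (interior implies $(d,C^\infty)$-smooth via the identity chart and $d$-regularity of a small ball under Lebesgue measure) needed to invoke Theorem \ref{thequil} with $p=d$ and $s\in[d-2,d)$.
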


Obviously, a $C^\infty$-smooth manifold without boundary satisfies the conditions of Theorem \ref{thequil}; therefore, we have the following consequence.
\begin{corollary}\label{corsupersmooth}
Let $A\subset \R^{d+1}$ be a compact $C^\infty$-smooth $d$-dimensional manifold without boundary, $d-1\leqslant s<d$ and $\mu_s$ be the $s$-equilibrium measure on $A$. Then $\mu_s$ is uniformly upper $d$-regular on $A$.
\end{corollary}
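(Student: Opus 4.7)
The plan is to apply Theorem \ref{thequil} at every point of $A$ and then upgrade the pointwise bound to a uniform one by compactness.

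First I would fix an arbitrary $x_0 \in A$ and observe that, since $A \subset \R^{d+1}$ is a compact $C^\infty$-smooth $d$-dimensional manifold without boundary, a local coordinate chart around $x_0$ provides a $C^\infty$-diffeomorphism from a neighborhood of $x_0$ in $A$ onto an open subset of $\R^d$, with a $C^\infty$-smooth inverse. This is precisely the $(d, C^\infty)$-smoothness condition at $x_0$ required by Theorem \ref{thequil}. With $p = d+1$ and $s \in [d-1, d) \subset [p-2, d)$, that theorem yields a constant $c(x_0) > 0$ satisfying
\begin{equation*}
\mu_s(B(x_0, r)) \leqslant c(x_0)\, r^d \quad \text{for every } r > 0.
\end{equation*}

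Next I would extract uniformity in the basepoint. The natural approach is to cover the compact manifold $A$ by finitely many open sets $U_1, \ldots, U_m$, on each of which a single smooth chart is defined over a neighborhood of $\overline{U_i}$, and then argue that the constant in Theorem \ref{thequil} can be chosen uniformly for $x_0$ ranging over $\overline{U_i}$. Setting $c := \max_i c_i$ would then give the uniform upper $d$-regularity on $A$.

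The main obstacle is verifying this chart-uniform version of Theorem \ref{thequil}. Its proof (given later in Section \ref{sectionproofs2}) constructs $c(x_0)$ using pseudo-differential symbol estimates in a chart near $x_0$, and these estimates depend only on finitely many derivatives of the local parametrization together with the embedding $A \hookrightarrow \R^{d+1}$. These quantities are uniformly bounded on the compact set $\overline{U_i}$ and, combined with the translation invariance of the relevant pseudo-differential estimates in local coordinates, they deliver a single constant $c_i$ valid for all $x_0 \in \overline{U_i}$ and all $r > 0$. Once this chart-uniform version of Theorem \ref{thequil} is in hand, the rest of the argument is a routine finite-cover reduction.
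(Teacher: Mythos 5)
Your approach matches the paper's: apply Theorem~\ref{thequil} at every point of $A$ and use compactness to make the constant uniform. The paper's actual ``proof'' is the single sentence preceding the corollary (``Obviously, a $C^\infty$-smooth manifold without boundary satisfies the conditions of Theorem~\ref{thequil}\dots''), so you are rightly filling in the uniformity step that the paper leaves implicit. Your compactness scheme is sound, but the justification of per-chart uniformity via ``translation invariance of the relevant pseudo-differential estimates'' is vaguer than it needs to be. The cleaner observation is that the proof of Theorem~\ref{thequil} does not merely produce a constant at the single point $x_0$: its actual output is that the pushed-forward measure $\nu=\mu^1\circ\psi$ has an $L^\infty$ density on the whole neighborhood $\psi^{-1}(B(x_0,r_0/4))$. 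Since $\psi$ is a smooth diffeomorphism (hence bi-Lipschitz on compact subsets), this yields, for every $x\in B(x_0,r_0/8)\cap A$ and every $r<r_0/8$, the bound $\mu_s(B(x,r))=\nu(\psi^{-1}(B(x,r)))\leqslant \|\nu\|_{L^\infty}\,\mathcal{L}_d(\psi^{-1}(B(x,r)))\leqslant C\,r^d$ with a constant $C$ depending only on $\|\nu\|_{L^\infty}$ and the bi-Lipschitz constant of $\psi$ --- uniformly over this whole neighborhood of $x_0$, not merely at $x_0$. Covering the compact manifold $A$ by finitely many such neighborhoods, taking the maximum of the resulting constants and the minimum $\rho>0$ of the admissible radii, and finally using the trivial bound $\mu_s(B(x,r))\leqslant 1\leqslant \rho^{-d}r^d$ for $r\geqslant\rho$ closes the argument without any need to re-examine the pseudo-differential machinery.
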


\section{Main results in the non-integrable case}\label{secmainnonint}
In this section we state an analog of Theorem \ref{dregularimpliessepar} for the case $s>d$ under very weak assumptions on the set $A$. As a consequence, we deduce separation and covering properties of greedy energy points in this case. These properties are proved in Section \ref{sectionproofs1}. Below $\mathcal{H}_d$ denotes the {\it $d$-dimensional Hausdorff measure} normalized by $\mathcal{H}_d([0,1]^d)=1$. By $\overline{\mathcal{M}}_d$ we denote the {\it upper $d$-dimensional Minkowskii content}; i.e., for a compact set $A\subset \R^p$, set
\begin{equation}\label{defincontent}
\overline{\mathcal{M}}_d(A):=\limsup_{\ep\to 0^+} \frac{\mathcal{L}_p\left( \{x\in \R^p\colon \dist(x, A)<\ep\}\right)}{\beta_{p-d}\ep^{p-d}},
\end{equation}
where $\mathcal{L}_p$ is the Lebesgue measure on $\R^p$ and $\beta_{p-d}$ is the volume of a $(p-d)$-dimensional unit ball (for $p=d$, we set $\beta_0$:=1).
\begin{prop}\label{thnonint1}
If $A\subset \R^p$ is a compact set with $\mathcal{H}_d(A)>0$ ($d\leqslant p$) and $s>d$, then there exists a constant $c>0$ such that for any $N$-point configuration $\omega_N=\{x_1, \ldots, x_N\}\subset A$ and $y^*\in A$ satisfying
$$
\sum_{j=1}^N \frac{1}{|y^*-x_j|^s} = \inf_{y\in A}\sum_{j=1}^N \frac{1}{|y-x_j|^s},
$$
we have, for every $j=1,\ldots, N$,
\begin{equation}\label{antoshka111}
|y^*-x_j|\geqslant c\cdot N^{-1/d}.
\end{equation}
\end{prop}
\begin{corollary}\label{cornonint2}
With the assumptions of Theorem \ref{thnonint1}, there exists a constant $c>0$ such that for any sequence $\omega^*_\infty=\{a^*_j\}_{j=1}^\infty$ of greedy energy points and any $i<j$, we have
\begin{equation}\label{eqnonintgreedysepar}
|a^*_i-a^*_j|\geqslant cj^{-1/d}.
\end{equation}
If, in addition, $A\subset \tilde{A}$ for a $d$-regular set $\tilde{A}$ and $\overline{\mathcal{M}}_d(A)<\infty$, then for some $c>0$ and every $N\geqslant 2$, the covering radius of $\omega^*_N:=\{a^*_1, \ldots, a^*_N\}\subset \omega^*_\infty$ satisfies
\begin{equation}\label{eqnonintgreedycover}
\eta(\omega^*_N, A)=\max_{y\in A}\min_{j=1,\ldots, N} |y-a_j^*|\leqslant cN^{-1/d}.
\end{equation}

\end{corollary}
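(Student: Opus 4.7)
The separation estimate \eqref{eqnonintgreedysepar} is immediate from Proposition \ref{thnonint1}. By the definition of greedy $s$-energy points, $a^*_j$ is a minimizer over $A$ of the map $y\mapsto \sum_{k=1}^{j-1}|y-a^*_k|^{-s}$, so Proposition \ref{thnonint1} applied to $\omega_{j-1}=\{a^*_1,\ldots,a^*_{j-1}\}$ with $y^*=a^*_j$ gives $|a^*_j-a^*_i|\geqslant c(j-1)^{-1/d}\geqslant c' j^{-1/d}$ for every $i<j$, with a constant depending only on $A$ and $s$.

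For the covering estimate \eqref{eqnonintgreedycover}, I will argue by contradiction. Suppose there exists $y^*\in A$ with $|y^*-a^*_j|>R$ for every $j\leqslant N$, and write $\Phi_k(y):=\sum_{j=1}^k|y-a^*_j|^{-s}$. The greedy rule yields $\Phi_N(a^*_{N+1})\leqslant \Phi_N(y^*)$. The plan is to show that the left-hand side is at least of order $N^{s/d}$ while the right-hand side is at most of order $NR^{d-s}$, forcing $R\leqslant CN^{-1/d}$.

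For the lower bound, I first observe that the sequence $k\mapsto \Phi_{k-1}(a^*_k)$ is non-decreasing: indeed,
\[
\Phi_k(a^*_{k+1})\geqslant \Phi_{k-1}(a^*_{k+1})\geqslant \Phi_{k-1}(a^*_k),
\]
where the last inequality uses that $a^*_k$ minimizes $\Phi_{k-1}$ on $A$. Hence $E_s(\omega^*_{N+1})=2\sum_{k=2}^{N+1}\Phi_{k-1}(a^*_k)\leqslant 2N\,\Phi_N(a^*_{N+1})$, and combining with the standard lower bound $E_s(\omega^*_{N+1})\geqslant \mathcal{E}_s(A,N+1)\geqslant c(N+1)^{1+s/d}$ (valid in the non-integrable regime under the hypotheses $\mathcal{H}_d(A)>0$ and $\overline{\mathcal{M}}_d(A)<\infty$) yields $\Phi_N(a^*_{N+1})\geqslant c_1N^{s/d}$.

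For the upper bound, I use the separation already established together with the $d$-regularity of $\tilde A$: since the balls $B(a^*_j,cN^{-1/d}/2)$ are disjoint in $\tilde A$, a standard volume argument shows that
\[
\#\{j\leqslant N:|y^*-a^*_j|<r\}\leqslant C\,r^dN\qquad\text{for every }r\geqslant cN^{-1/d}.
\]
A dyadic decomposition of the index set into the annuli $\{j:2^{k-1}R\leqslant|y^*-a^*_j|<2^kR\}$, $k\geqslant 1$, together with the geometric series $\sum_{k\geqslant 1}2^{k(d-s)}<\infty$ (which converges precisely because $s>d$), gives $\Phi_N(y^*)\leqslant c_2NR^{d-s}$. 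Comparing the two estimates yields $R\leqslant CN^{-1/d}$, contradicting the assumption as soon as $R$ exceeds this bound. The main obstacle is to identify and justify the lower bound $\mathcal{E}_s(A,N)\geqslant cN^{1+s/d}$ in the exact generality of the corollary; this is precisely where the extra Minkowski-content hypothesis enters, and is the reason the covering statement requires strictly more than the separation statement.
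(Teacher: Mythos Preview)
Your argument is correct and follows the same overall strategy as the paper. The separation part is identical: apply Proposition~\ref{thnonint1} with $y^*=a_j^*$ and the configuration $\{a_1^*,\dots,a_{j-1}^*\}$. For the covering part, both you and the paper first establish the lower bound $\inf_{y\in A}\sum_{j=1}^{N}|y-a_j^*|^{-s}\geqslant p_s N^{s/d}$ via the same monotonicity observation (your inequality $\Phi_{k}(a_{k+1}^*)\geqslant\Phi_{k-1}(a_k^*)$ is exactly the paper's bound $\Phi_{j-1}(a_j^*)\leqslant\Phi_{N-1}(a_N^*)$ rewritten) combined with the energy lower bound of Theorem~\ref{therdel}, which is where $\overline{\mathcal{M}}_d(A)<\infty$ enters.

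The only difference is in the final step: the paper then simply invokes Theorem~\ref{thpotentiallast} (quoted from \cite{Hardin2012}) as a black box, whereas you reprove its content directly via the packing/counting bound $\#\{j\leqslant N:|y^*-a_j^*|<r\}\leqslant C r^d N$ (using the already-established $cN^{-1/d}$ separation together with the $d$-regularity of~$\tilde A$) and a dyadic summation. Your route is more self-contained; the paper's is shorter because it can cite the existing covering lemma. Substantively the two proofs coincide.
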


\section{Some facts from Potential theory}\label{factspotential}
For the convenience of the reader we state several known results from potential theory that will be used in the proofs of the above formulated theorems. The following theorem can be found, for example, in \cite[p. 136]{Landkof1972} or \cite[Theorems 4.2.15 and 4.5.11]{Borodachov2016}.
\begin{theorem}\label{landkofblabla}
If $A\subset \R^p$ is a compact set of positive $s$-capacity, then the $s$-equilib\-ri\-um measure $\mu_s$ is unique. Moreover, the inequality $U_s^{\mu_s}(x)\leqslant W_s(A)$ holds $\mu_s$-a.e. and the inequality $U_s^{\mu_s}(x)\geqslant W_s(A)$ holds $s$-quasi-everywhere; i.e., if $F\subset\{x\in A \colon U_s^\mu(x) < W_s(A)\}$ is compact, then $W_s(F)=\infty$. Furthermore, if $s\in [p-2, p)$, then $U_s^{\mu_s}(x)\leqslant W_s(A)$ for every $x\in \R^p$.

\end{theorem}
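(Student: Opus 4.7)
The plan is to establish the three assertions in sequence: uniqueness of $\mu_s$, the two variational (Frostman) inequalities, and finally the global pointwise maximum principle for $s\in[p-2,p)$, which is the deepest ingredient.

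Uniqueness would follow from strict positive definiteness of the Riesz $s$-kernel. In the sense of tempered distributions, $\widehat{|x|^{-s}}=c_{s,p}\,|\xi|^{s-p}$ for $0<s<p$, so for any signed Borel measure $\sigma$ of compact support and finite energy,
$$I_s[\sigma]=c_{s,p}\int_{\R^p}|\widehat{\sigma}(\xi)|^2\,|\xi|^{s-p}\,d\xi\geqslant 0,$$
with equality only if $\sigma=0$. Combined with the polarization identity $I_s[(\mu+\nu)/2]+\tfrac14 I_s[\mu-\nu]=\tfrac12 I_s[\mu]+\tfrac12 I_s[\nu]$, strict convexity of $\mu\mapsto I_s[\mu]$ along any segment joining two minimizers forces $\mu_s$ to be unique.

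Next, I would derive the variational inequalities via Frostman's perturbation argument. For any probability measure $\nu$ on $A$ of finite $s$-energy, the minimality of $\mu_s$ applied to $(1-t)\mu_s+t\nu$ and expanded as $t\to 0^+$ yields $\int U_s^{\mu_s}\,d\nu\geqslant W_s(A)$. Varying $\nu$ over equilibrium measures of compact subsets of $A$ of positive capacity produces the quasi-everywhere lower bound $U_s^{\mu_s}\geqslant W_s(A)$ on $A$. Integrating this against $\mu_s$, together with the Fubini identity $\int U_s^{\mu_s}\,d\mu_s=W_s(A)$, then forces $U_s^{\mu_s}\leqslant W_s(A)$ $\mu_s$-almost everywhere, since a strict inequality on a set of positive $\mu_s$-measure would violate the total-energy identity.

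The global pointwise upper bound for $s\in[p-2,p)$ is the main obstacle. When $s=p-2$, $U_s^{\mu_s}$ is a constant multiple of the classical Newtonian potential: it is superharmonic on $\R^p$ and harmonic off $\supp\mu_s$, so the classical maximum principle for superharmonic functions combined with lower semicontinuity immediately upgrades the $\mu_s$-a.e.\ upper bound to one valid everywhere. For $s\in(p-2,p)$ the potential is subharmonic outside $\supp\mu_s$ and direct superharmonicity fails, so I would invoke the Riesz composition formula
$$|x|^{-\alpha}\ast|x|^{-\beta}=C(\alpha,\beta,p)\,|x|^{-(\alpha+\beta-p)},\quad \alpha+\beta>p,\ \alpha,\beta\in(0,p),$$
to represent $U_s^{\mu_s}$ as an iterated Riesz potential of a positive density, and then apply the Frostman--Maria-type domination principle for equilibrium potentials to upgrade the $\mu_s$-a.e.\ bound to an everywhere bound. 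This last step is delicate precisely because superharmonicity is unavailable in the range $s\in(p-2,p)$, and one must appeal to the Landkof potential-theoretic machinery (composition formula, continuity and domination principles, and balayage), which is tied to the specific structure of the equilibrium measure rather than working for arbitrary measures in this range.
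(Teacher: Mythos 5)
The paper does not prove this theorem at all: it is quoted with a citation to \cite[p.~136]{Landkof1972} and to \cite[Theorems~4.2.15 and~4.5.11]{Borodachov2016}, so there is no ``paper's proof'' to compare against. Your blind sketch is nevertheless a reasonable reconstruction of the classical argument, and it is worth recording where it is solid and where it is thinner than it looks. The uniqueness step via the positive-definiteness identity $I_s[\sigma]=c_{s,p}\int|\widehat\sigma|^2|\xi|^{s-p}d\xi$ and polarization is standard and correct. The Frostman perturbation argument giving $\int U_s^{\mu_s}\,d\nu\geqslant W_s(A)$ for finite-energy $\nu$, then the quasi-everywhere lower bound by testing against equilibrium measures of compact subsets of the exceptional set, and finally the $\mu_s$-a.e.\ upper bound from $\int U_s^{\mu_s}\,d\mu_s=W_s(A)$, is also the right route; one should add explicitly that $\mu_s$ charges no set of zero $s$-capacity (a consequence of finite energy) so that the quasi-everywhere lower bound is also a $\mu_s$-a.e.\ lower bound before the integral argument closes. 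For the global bound in $s\in[p-2,p)$ you correctly identify the Ugaheri--Frostman maximum principle as the crux for $s\in(p-2,p)$, obtained via the Riesz composition formula. Your treatment of the Newtonian endpoint $s=p-2$ is slightly too optimistic: ``superharmonic plus harmonic off the support plus lower semicontinuity'' does not by itself upgrade a $\mu_s$-a.e.\ bound to a bound on all of $\partial\supp\mu_s$ that you can feed into the maximum principle for harmonic functions. That upgrade is precisely the content of Maria's (or Frostman's) maximum principle for Newtonian potentials, which deserves to be quoted as a theorem rather than folded into a one-line remark; the structure of the proof is the same balayage/continuity-principle machinery you invoke for the fractional range.
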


The following theorem is a special case of \cite[Theorem 2.5]{reznikov2016minimum}.
\begin{theorem}\label{coralwaysequal}
Let $s<d$ and $\mu$ be a measure supported on $A\subset \R^p$, where $A$ is $d$-regular. If for some constant $M$ the inequality $U_s^\mu(x)\geqslant M$ holds $s$-quasi-everywhere on $A$, then it holds everywhere on $A$.  
\end{theorem}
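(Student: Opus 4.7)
The plan is a proof by contradiction that combines the lower semi-continuity of the Riesz potential with the well-known fact that $d$-regular sets carry measures of finite $s$-energy whenever $s<d$.

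First, I would suppose $U_s^\mu(x_0)<M$ for some $x_0\in A$. Since $\mu\geqslant 0$ and the kernel $|x-y|^{-s}$ is lower semi-continuous in $x$, Fatou's lemma makes $U_s^\mu$ lower semi-continuous on $\R^p$, so $\{U_s^\mu<M\}$ is open. I would then pick $r\in(0,\diam(A))$ so small that $U_s^\mu<M$ on $\overline{B(x_0,r)}$, and set $F:=A\cap \overline{B(x_0,r)}$. This is compact and contained in $\{x\in A:U_s^\mu(x)<M\}$, so the $s$-quasi-everywhere hypothesis (as made precise in Theorem \ref{landkofblabla}) forces $W_s(F)=\infty$, i.e.\ $F$ has zero $s$-capacity.

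Next, I would derive a contradiction from the $d$-regularity of $A$. Let $\lambda$ be the witnessing measure in Definition \ref{defregular} and set $\lambda_F:=\lambda|_F$; the lower bound $\lambda_F(F)\geqslant \lambda(A\cap B(x_0,r/2))\geqslant C^{-1}(r/2)^d>0$ guarantees that $\lambda_F$ is nontrivial. A standard layer-cake computation,
$$\int_F \frac{d\lambda(y)}{|x-y|^s}=s\int_0^{\infty}\rho^{-s-1}\,\lambda\bigl(F\cap B(x,\rho)\bigr)\,d\rho,$$
combined with $d$-regularity at each $x\in F\subset A$, the bound $\lambda(F)<\infty$ for $\rho\geqslant \diam(A)$, and the strict inequality $s<d$, gives a bound uniform in $x\in F$. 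Integrating once more against $\lambda_F$ yields $I_s[\lambda_F]<\infty$, so $F$ has positive $s$-capacity, contradicting the previous paragraph.

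The main (mild) obstacle is the uniform finite-energy bound in the display above; it is the classical $s<d$ computation, and the fact that $d$-regularity of $A$ holds at \emph{every} point of $A$ (rather than merely almost everywhere) is exactly what makes the supremum finite without extra work. Apart from this, only the definitions and Theorem \ref{landkofblabla} from Section \ref{factspotential} are invoked, and the whole argument occupies essentially one page.
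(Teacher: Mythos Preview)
Your argument is correct. The lower semi-continuity step, the extraction of a compact ball $F=A\cap\overline{B(x_0,r)}$ inside $\{U_s^\mu<M\}$, and the layer-cake estimate showing that the restriction $\lambda_F$ of the $d$-regular measure has finite $s$-energy (hence $W_s(F)<\infty$) all go through exactly as you describe; the uniform bound over $x\in F$ is indeed immediate from the two-sided regularity inequality in Definition~\ref{defregular} together with $s<d$.

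There is, however, nothing to compare your proof to in the present paper: Theorem~\ref{coralwaysequal} is not proved here but is quoted from \cite[Theorem~2.5]{reznikov2016minimum} as a known fact. Your self-contained argument is precisely the standard one underlying that result in the $d$-regular case, so it is entirely appropriate as a replacement for the citation.
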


We conclude this section with two results from the theory of non-integrable Riesz potentials. The first result can be found in \cite[Theorem 2.4]{Erdelyi2013} and \cite[Proposition 2.5]{Borodachov2007}, while the second is a consequence of the proof of \cite[Theorem 3]{Hardin2012}. 
\begin{theorem}\label{therdel}
Assume $A\subset \R^p$, $\mathcal{H}_d(A)>0$ and $s>d$. Then there exists two positive constants $c_1(s)$ and $c_2(s)$ such that for any $N$-point configuration $\omega_N=\{x_1, \ldots, x_N\} \subset A$ we have
$$
\inf_{y\in A}\sli_{j=1}^N \frac{1}{|y-x_j|^s} \leqslant c_1(s)N^{s/d}
$$
and
$$
E_s(\omega_N) = \sli_{i\not = j} \frac{1}{|x_i-x_j|^s} \geqslant c_2(s) \overline{\mathcal{M}}_d(A)^{-s/d} N^{1+s/d}.
$$
\end{theorem}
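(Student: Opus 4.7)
The plan is to prove the two inequalities by separate techniques: the first via a Frostman measure together with a forbidden-ball truncation, the second via a layer-cake representation of the energy combined with a pigeonhole argument controlled by the Minkowski content.

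For the upper bound, since $\mathcal{H}_d(A)>0$, Frostman's lemma supplies a Borel probability measure $\mu$ supported on $A$ with $\mu(B(x,r))\leq Cr^d$ for all $x,r$. Fix $r_N:=\alpha N^{-1/d}$ with $\alpha$ small enough that the ``forbidden'' set $F:=\bigcup_j B(x_j,r_N)$ has $\mu(F)\leq NCr_N^d=C\alpha^d\leq \tfrac12$, and let $E:=A\setminus F$. On $E$ one has $|y-x_j|\geq r_N$ for every $j$, so a layer-cake computation using the $d$-regularity of $\mu$ gives, for each $j$,
\[
\int_E |y-x_j|^{-s}\,d\mu(y)\ \leq\ \int_0^{r_N^{-s}} \mu\bigl(B(x_j,u^{-1/s})\bigr)\,du\ \leq\ \frac{C}{1-d/s}\,r_N^{d-s}.
\]
Summing over the $N$ indices and using $r_N^{d-s}=\alpha^{d-s}N^{(s-d)/d}$ bounds the $\mu$-average of $\sum_j|y-x_j|^{-s}$ on $E$ by $c_1(s)N^{s/d}$; since $\mu(E)\geq\tfrac12$, some $y^*\in E\subset A$ realizes at most twice this average, yielding the first inequality.

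For the lower bound I would introduce the pair-counting function $P(t):=\#\{(i,j):i<j,\ |x_i-x_j|\leq t\}$ and write
\[
E_s(\omega_N)\ =\ 2s\int_0^\infty t^{-s-1}P(t)\,dt.
\]
The geometric input is a lower bound on $P(t)$. Pick a maximal $t$-separated subset of $A$: its half-balls are disjoint and lie inside the Minkowski tube $A_{t/2}$, so the set has cardinality at most $M(t):=\tilde C\,\overline{\mathcal{M}}_d(A)\,t^{-d}$, and the doubled balls then cover $A$. Assigning each $x_j$ to one containing ball and applying convexity of $k\mapsto\binom{k}{2}$ yields
\[
P(2t)\ \geq\ M(t)\binom{N/M(t)}{2}\ \geq\ \frac{N^2}{8M(t)}\ \geq\ \frac{N^2 t^d}{8\tilde C\,\overline{\mathcal{M}}_d(A)},
\]
valid for $t\geq t_N:=(2\tilde C\,\overline{\mathcal{M}}_d(A)/N)^{1/d}$ (so that $N/M(t)\geq 2$). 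Inserting this into the layer-cake integral and using $s>d$ so that $\int t^{d-s-1}\,dt$ is concentrated at the lower endpoint gives
\[
E_s(\omega_N)\ \geq\ \frac{c N^2}{\overline{\mathcal{M}}_d(A)}\cdot t_N^{d-s}\ =\ c_2(s)\,\overline{\mathcal{M}}_d(A)^{-s/d}\,N^{1+s/d}.
\]

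The main obstacle is the delicate handling of the Minkowski content: because $\overline{\mathcal{M}}_d(A)$ is a $\limsup$, the estimate $\mathrm{vol}(A_\epsilon)\leq(1+o(1))\beta_{p-d}\overline{\mathcal{M}}_d(A)\epsilon^{p-d}$ is only asymptotic, so the covering bound $|\{\text{maximal }t\text{-separated set}\}|\leq \tilde C\,\overline{\mathcal{M}}_d(A)t^{-d}$ requires $t$ below a threshold. This forces a verification that $t_N\to 0$ (automatic for large $N$) and absorption of the $(1+o(1))$ factor into constants, with bounded $N$ treated separately via a trivial single-pair argument. The Frostman truncation in the first inequality is routine once the scale $r_N\sim N^{-1/d}$ is committed to; the only computation is the exponent check $N\cdot r_N^{d-s}=\alpha^{d-s}N^{s/d}$, which is automatic.
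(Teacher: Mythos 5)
The paper does not prove this theorem itself: it cites \cite[Theorem 2.4]{Erdelyi2013} and \cite[Proposition 2.5]{Borodachov2007} for the polarization upper bound and the proof of \cite[Theorem 3]{Hardin2012} for the energy lower bound, so there is no in-paper argument to compare against. Your proposal is a correct, self-contained derivation in the same spirit as those references: a Frostman measure with a forbidden-ball truncation and a layer-cake average for the first inequality, and a packing/pigeonhole lower bound on the pair-counting function $P(t)$ fed into the identity $E_s(\omega_N)=2s\int_0^\infty t^{-s-1}P(t)\,dt$ for the second. Two points are worth making explicit when you write out the details. Both constants necessarily depend on $A$, not only on $s$ (through the Frostman normalization in the first half, and through the Minkowski threshold scale and $\mathrm{diam}(A)$ in the small-$N$ fallback of the second half); this is consistent with the quantifier order in the statement -- $A$ is fixed before ``there exist constants'' -- even though the notation $c_1(s),c_2(s)$ might suggest otherwise. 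And the cleanest way to resolve the asymptotic nature of $\overline{\mathcal{M}}_d$ is to integrate $P$ over a single dyadic window $[2t_N,4t_N]$ once $N$ is large enough that $4t_N$ lies below the threshold $t_0$ past which $\mathcal{L}_p(A_\epsilon)\leqslant 2\overline{\mathcal{M}}_d(A)\beta_{p-d}\epsilon^{p-d}$ holds; this yields a constant depending only on $s,d,p$ in the large-$N$ regime, and the finitely many remaining $N$ are absorbed by $E_s(\omega_N)\geqslant N(N-1)\,\mathrm{diam}(A)^{-s}$ after shrinking $c_2$.
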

\begin{theorem}\label{thpotentiallast}
Suppose the compact set $A\subset \R^p$ with $\mathcal{H}_d(A)>0$ is contained in some $d$-regular compact set $\tilde{A}$ and $s>d$. If $\omega_N=\{x_1, \ldots, x_N\}\subset A$ is an $N$-point configuration with separation distance $\delta(\omega_N)\geqslant \tau N^{-1/d}$ for some $\tau>0$, then for some constant $R(s, \tau, p_s)$,
\begin{equation}
\eta(\omega_N, A):=\max_{y\in A}\min_{j=1,\ldots, N} |y-x_j|\leqslant R(s, \tau, p_s) N^{-1/d},
\end{equation}
where $p_s$ is any positive constant such that 
\begin{equation}\label{pspsps}
\inf_{y\in A} \sli_{j=1}^N \frac{1}{|y-x_j|^s} \geqslant p_s N^{s/d}.
\end{equation}
\end{theorem}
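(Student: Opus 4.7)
The plan is to run a standard shell/packing argument in which the hypothesis $s>d$ makes a geometric series converge. Fix any $y\in A$ and abbreviate $\eta_y:=\min_{j}|y-x_j|$. If $\eta_y\leqslant \tau N^{-1/d}$ there is nothing to prove, so assume $\eta_y>\tau N^{-1/d}$, and set $\delta:=\tau N^{-1/d}$ and $r_k:=2^{k}\eta_y$ for $k=0,1,2,\ldots$. The whole configuration then lies in $\bigcup_{k\geqslant 0}\{x_j:r_k\leqslant |y-x_j|<r_{k+1}\}$.

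The first key step is a packing estimate coming from the $d$-regularity of $\tilde A$. Since $\omega_N\subset A\subset\tilde A$, the balls $B(x_j,\delta/2)$ are pairwise disjoint and each satisfies $\lambda(B(x_j,\delta/2)\cap\tilde A)\geqslant C^{-1}(\delta/2)^{d}$ by the lower $d$-regularity bound, while $\lambda(B(y,r_{k+1}+\delta/2)\cap\tilde A)\leqslant C(r_{k+1}+\delta/2)^{d}$ by the upper bound. Dividing and using $r_{k+1}\geqslant\eta_y>\delta$ yields
\begin{equation*}
N_k:=\bigl|\{j:|y-x_j|<r_{k+1}\}\bigr|\leqslant C_1(r_{k+1}/\delta)^{d}
\end{equation*}
for some constant $C_1=C_1(\tilde A,p)$.

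The second step combines this with the polarization lower bound \eqref{pspsps}. Using $|y-x_j|^{-s}\leqslant r_k^{-s}$ on the $k$-th shell and the geometric sum $\sum_k 2^{k(d-s)}=(1-2^{d-s})^{-1}$ (which converges exactly because $s>d$), one gets
\begin{equation*}
p_s N^{s/d}\leqslant\sum_{j=1}^{N}\frac{1}{|y-x_j|^{s}}\leqslant\sum_{k\geqslant 0}r_k^{-s}N_k\leqslant C_2\,\eta_y^{d-s}\delta^{-d}=C_2\tau^{-d}\eta_y^{d-s}N,
\end{equation*}
where $C_2=C_2(s,\tilde A,p)$. Solving for $\eta_y$ (recall $s-d>0$) gives $\eta_y\leqslant (C_2/(p_s\tau^{d}))^{1/(s-d)}N^{-1/d}$, which is the claimed bound since $y\in A$ was arbitrary.

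I expect no real obstacle: the only subtlety is making sure the packing estimate is applied at all relevant scales, which is handled cleanly by the case split $\eta_y\lessgtr\delta$. The two hypotheses play complementary roles — separation supplies the packing upper bound on $N_k$, while polarization supplies the lower bound on the Riesz sum — and the integrability threshold $s>d$ enters only through convergence of the tail series.
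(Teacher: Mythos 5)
Your shell/packing argument is correct, and it is the natural route to this estimate: the separation hypothesis together with $d$-regularity of $\tilde A$ gives the shell count $N_k\lesssim (r_{k+1}/\delta)^d$, the assumption $s>d$ makes the dyadic series converge to $\lesssim \eta_y^{d-s}\delta^{-d}$, and the polarization lower bound \eqref{pspsps} then forces $\eta_y\lesssim N^{-1/d}$. The paper itself does not reproduce a proof of this statement — it cites it as ``a consequence of the proof of [Theorem 3]'' in \cite{Hardin2012} — so there is no in-paper argument to compare against, but your derivation is the standard one and matches the approach used in that reference.
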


\section{Proofs of Theorem \ref{dregularimpliessepar} and Theorem \ref{thnonint1}}\label{sectionproofs1} 
For $x=(x(1), \ldots, x(p))\in A$, set $x_r:=(x(1), \ldots, x(p), r)\in \R^{p+1}$ and consider $A$ as a subset of $\R^{p+1}$ with $x=x_0$; i.e., $x(p+1)=0$.

The next lemma is related to results of Carleson \cite{Carleson1963} for $s\in [d-1, d)$ and Wallin \cite{Wallin1966}.
\begin{lemma}\label{potentialsecondterm}
Assume the measure $\mu$ on $A$ is upper $d$-regular at $x\in A$. If $d-2<s<d$, then there exists a constant $c_1$ that depends only on $s$ and $d$ such that 
$$
U_s^\mu(x_r)\geqslant U_s^\mu(x)-c_1 \cdot c(x)\cdot r^{d-s}.
$$
\end{lemma}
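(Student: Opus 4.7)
The plan is to control the quantity
\[
U_s^\mu(x) - U_s^\mu(x_r) \;=\; \int_A\!\left(\frac{1}{|x-y|^s}-\frac{1}{(|x-y|^2+r^2)^{s/2}}\right)d\mu(y),
\]
which is non-negative pointwise (since $|x_r-y|^2=|x-y|^2+r^2\ge|x-y|^2$), and to show it is bounded above by $c_1 c(x) r^{d-s}$. I would split the integral according to whether $|x-y|\le r$ or $|x-y|>r$, estimate each piece by a separate crude inequality, and finally carry out the integration via the layer-cake representation using the upper $d$-regularity bound $\mu(B(x,\rho))\le c(x)\rho^d$.

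For the close part $\{|x-y|\le r\}$, I would simply drop the second (subtracted) term and bound the integrand by $|x-y|^{-s}$. Layer cake gives
\[
\int_{|x-y|\le r}\!|x-y|^{-s}\,d\mu(y)\;\le\;r^{-s}\mu(B(x,r))+\int_{r^{-s}}^{\infty}\mu(B(x,\lambda^{-1/s}))\,d\lambda,
\]
and plugging in $\mu(B(x,\rho))\le c(x)\rho^d$ together with the convergence condition $s<d$ yields a bound of the form $\frac{d}{d-s}\,c(x)\,r^{d-s}$.

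For the far part $\{|x-y|>r\}$, I would apply the mean value theorem to the function $u\mapsto u^{-s/2}$ on the interval $(|x-y|^2,|x-y|^2+r^2)$, which gives the pointwise bound
\[
\frac{1}{|x-y|^s}-\frac{1}{(|x-y|^2+r^2)^{s/2}}\;\le\;\frac{s}{2}\,|x-y|^{-s-2}\,r^2.
\]
Then layer cake applied to $|x-y|^{-s-2}\chi_{\{|x-y|>r\}}$ produces
\[
\int_{|x-y|>r}|x-y|^{-s-2}\,d\mu(y)\;\le\;c(x)\!\int_{0}^{r^{-s-2}}\!\lambda^{-d/(s+2)}\,d\lambda,
\]
and here the convergence at $0$ requires precisely $d/(s+2)<1$, i.e.\ $s>d-2$, which is the hypothesis. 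Evaluating gives a bound of the form $\frac{s+2}{s+2-d}\,c(x)\,r^{d-s-2}$, and multiplying by $r^2$ brings us back to $r^{d-s}$.

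Adding the two estimates and absorbing $s$- and $d$-dependent factors into a single constant $c_1=c_1(s,d)$ completes the proof. The main subtlety — really the only one — is making sure that both critical exponents $d/s>1$ (for the near part) and $d/(s+2)<1$ (for the far part) are strict; these are exactly the inequalities $s<d$ and $s>d-2$ stipulated in the hypothesis, so the argument is tight. No cancellation or delicate estimate is needed beyond this; the work lies in carefully writing the layer-cake bookkeeping.
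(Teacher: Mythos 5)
Your proposal is correct and follows essentially the same route as the paper: the same near/far decomposition, layer-cake evaluations, and the observation that $s<d$ and $s>d-2$ are exactly the conditions making the two exponents work. The only difference is cosmetic — you bound the far-part integrand via the mean value theorem for $u\mapsto u^{-s/2}$ (which is slightly cleaner and lets you cut at $r$ rather than $2r$), whereas the paper uses the elementary inequality $(1+\varepsilon)^t\le 1+c\varepsilon$ on $[0,1/4]$ and hence cuts at $2r$.
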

\begin{proof}
We first notice that for $x,y\in A$ we have $|y-x_r|^2 = |y-x|^2+r^2$. Therefore,
\begin{multline}\label{eqqq}
U_s^\mu(x)-U_s^\mu(x_r) = \int_A \frac{(|y-x|^2+r^2)^{s/2} - |y-x|^s}{(|y-x|^2+r^2)^{s/2} \cdot |y-x|^s} \text{d}\mu(y)  \\
=\ili_{|y-x|\leqslant 2r}\frac{(|y-x|^2+r^2)^{s/2} - |y-x|^s}{(|y-x|^2+r^2)^{s/2} \cdot |y-x|^s} \text{d}\mu(y) \\ + \ili_{|y-x|>2r}\frac{(|y-x|^2+r^2)^{s/2} - |y-x|^s}{(|y-x|^2+r^2)^{s/2} \cdot |y-x|^s} \text{d}\mu(y) =: I_1 + I_2.
\end{multline}

We have 
\begin{multline}\label{estint1}
I_1\leqslant \ili_{|y-x|\leqslant 2r} \frac{\textup{d}\mu(y)}{|y-x|^s} = \int_{0}^\infty \mu\{y\colon |y-x|\leqslant 2r, \; |y-x|^{-s}>t\}\text{d}t \\
= \int_{0}^{(2r)^{-s}} \mu\{y\colon |y-x|\leqslant 2r\} \text{d}t + \int_{(2r)^{-s}}^\infty \mu\{y\colon |y-x|<t^{-1/s}\}\text{d}t \\
\leqslant c(x) (2r)^{d-s} + c(x) \frac{s}{d-s} (2r)^{d-s} = 2^{d-s}\cdot \frac{d}{d-s} \cdot c(x) \cdot r^{d-s} = c_2 \cdot c(x) \cdot r^{d-s},
\end{multline}
where the constant $c_1$ depends only on $s$ and $d$.

To estimate $I_2$ we need the following inequality. For every positive $t$ there exists a constant $c$, such that for every $\ep<1/4$ we have
$$
(1+\ep)^t \leqslant 1+c\ep.
$$
This estimate is trivial since the function $\ep \mapsto ((1+\ep)^t -1)/\ep$ is continuous on the closed interval $[0, 1/4]$. Therefore,
\begin{multline}\label{estint2}
I_2 = \ili_{|y-x|>2r}\frac{(|y-x|^2+r^2)^{s/2} - |y-x|^s}{(|y-x|^2+r^2)^{s/2} \cdot |y-x|^s} \text{d}\mu(y) \\
\leqslant cr^2 \ili_{|y-x|>2r} \frac{\text{d}\mu(y)}{|y-x|^{s+2}} \leqslant cr^2 \ili_{0}^{(2r)^{-s-2}} \mu\{y\colon |y-x|<t^{-1/(s+2)}\}\text{d}t \\
\leqslant c_3\cdot c(x) \cdot r^2 \ili_{0}^{(2r)^{-s-2}} t^{-d/(s+2)}\text{d}t = c_4 \cdot c(x) \cdot r^{d-s}.
\end{multline}
Equality \eqref{eqqq} combined with estimates \eqref{estint1} and \eqref{estint2} imply the lemma.
\end{proof}
\subsection{Proof of Theorem \ref{dregularimpliessepar}}
Set 
$$
\gamma_N:=\sum_{j=1}^N \frac{1}{|y^*-x_j|^s} = \inf_{y\in A}\sum_{j=1}^N \frac{1}{|y-x_j|^s}.
$$
Since by Theorem \ref{landkofblabla} we have $U_s^{\mu_s}(x)\leqslant W_s(A)$ for every $x\in \R^p$, we deduce that
\begin{equation}\label{erdest}
\gamma_N \leqslant W_s(A) N .
\end{equation}
Setting $\nu(\omega_N):=\frac{1}{N}\sum_{j=1}^N \delta_{x_j}$, we obtain for $y\in A$ that
$$
U_s^{\nu(\omega_N)}(y)\geqslant \frac{1}{N}\frac{\gamma_N}{W_s(A)} W_s(A) \geqslant \frac{1}{N}\frac{\gamma_N}{W_s(A)} U_s^{\mu_s}(y),
$$
which by the domination principle for potentials (see \cite{ito64}) and Lemma \ref{potentialsecondterm} implies for $r:=N^{-1/d}$ that
\begin{equation}\label{etomine}
U_s^{\nu(\omega_N)}(y_r^*)\geqslant \frac{1}{N} \frac{\gamma_N}{W_s(A)} U_s^{\mu_s}(y^*_r) \geqslant  \frac{1}{N} \frac{\gamma_N}{W_s(A)} \left(U_s^{\mu_s}(y^*) - c_1\cdot c(y^*) N^{-1+s/d}\right).
\end{equation}
By Theorem \ref{coralwaysequal} and the $\ell$-regularity of $A$ at $y^*$, $U_s^{\mu_s}(y^*)\geqslant W_s(A)$; thus, it follows from \eqref{erdest} and \eqref{etomine} that
$$
U_s^{\nu(\omega_N)}(y_r^*) \geqslant \frac{\gamma_N}{N} - c_1 \cdot c(y^*) N^{-1+s/d},
$$
or
$$
\sli_{j=1}^N \frac{1}{|y_r^*-x_j|^s} \geqslant \gamma_N - c_1 \cdot c(y^*) N^{s/d}.
$$
Without loss of generality, we prove \eqref{antoshka} for $j=1$. Since $|y_r^*-x_1|\geqslant r=N^{-1/d}$ and $|y_r^*-x|\geqslant |y-x|$ for every $x\in A$, we have
\begin{multline}
\gamma_N - c_1 \cdot c(y^*) N^{s/d} \leqslant \sli_{j=1}^N \frac{1}{|y_r^*-x_j|^s} = \sli_{j=2}^N \frac{1}{|y_r^*-x_j|^s} + \frac{1}{|y_r^*-x_1|^s} \\
\leqslant \sli_{j=2}^N \frac1{|y^*-x_j|^s} + N^{s/d} = \sli_{j=1}^N \frac1{|y^*-x_j|^s} - \frac{1}{|y^*-x_1|^s} + N^{s/d} = \gamma_N -\frac{1}{|y^*-x_1|^s} + N^{s/d}. 
\end{multline}
Therefore,
$$
|y^*-x_1| \geqslant (c_1 c(y^*) + 1)^{-1/s} \cdot N^{-1/d}.
$$
\hfill \qed

\subsection{Proof of Proposition \ref{thnonint1}}\label{proofnonint1}
The proof is immediate. We merely observe that, by Theorem \ref{therdel} we have for every $j=1,\ldots,N$,
$$
c_1(s)N^{s/d}\geqslant \sli_{j=1}^N \frac{1}{|y^*-x_j|^s} \geqslant |y^*-x_j|^{-s};
$$
therefore,
$$
|y^*-x_j|\geqslant c_1(s)^{-1/s}N^{-1/d}.
$$
\hfill \qed
\subsection{Proof of Corollary \ref{cornonint2}}\label{proofnonint2}
We notice that the estimate \eqref{eqnonintgreedysepar} follows from Proposition \ref{thnonint1} and the fact that for every $j$ we have
$$
\sli_{i=1}^{j-1}\frac{1}{|a^*_j-a^*_i|^{s}} = \inf_{y\in A}\sli_{i=1}^{j-1}\frac{1}{|y-a^*_i|^{s}}.
$$
In view of inequality \eqref{pspsps} in Theorem \ref{thpotentiallast}, to deduce \eqref{eqnonintgreedycover} it is enough to show that the inequality
\begin{equation}\label{pspsps222}
\inf_{y\in A} \sli_{j=1}^N \frac{1}{|y-a^*_j|^s} \geqslant p_s N^{s/d}
\end{equation}
holds for some positive constant $p_s$ independent of $N$. For this purpose, observe that Theorem \ref{therdel} implies that for some positive $c$ that does not depend on $N$ we have, for $\omega_N=\{a_1^*, \ldots, a_N^*\}$,
\begin{equation}\label{nezadavali}
E_s(\omega_N)\geqslant c N^{1+s/d}.
\end{equation}
Hence, for every $j=1,\ldots, N$,
$$
\sli_{i=1}^{j-1}\frac{1}{|a^*_j-a^*_i|^s}=\inf_{y\in A}\sli_{i=1}^{j-1}\frac{1}{|y-a^*_i|^s} \leqslant \sli_{i=1}^{j-1}\frac{1}{|a^*_N-a^*_i|^s}\leqslant \sli_{i=1}^{N-1}\frac{1}{|a^*_N-a^*_i|^s},
$$
and so
$$
E_s(\omega_N) = 2\sli_{j=2}^N \sli_{i=1}^{j-1} \frac{1}{|a^*_j-a^*_i|^s} \leqslant 2N \sli_{i=1}^{N-1}\frac{1}{|a^*_N-a^*_i|^s} = 2N \inf_{y\in A}\sli_{i=1}^{N-1}\frac{1}{|y-a^*_i|^s}.
$$
In view of \eqref{nezadavali}, we get
$$
\inf_{y\in A}\sli_{i=1}^{N-1}\frac{1}{|y-a^*_i|^s} \geqslant c_2 N^{s/d}.
$$
Applying this estimate for $N$ instead of $N-1$, inequality \eqref{pspsps222} follows with $p_s=c_2$. \hfill \qed

\section{Some facts from the theory of pseudo-differential operators}\label{factspseudo}
In order to prove Theorem \ref{thequil} we need some facts from the theory of pseudo-differential operators that we will need. We give a brief introduction to the results we need in this section.

Let $\mathscr{S}(\R^d)$ be the class of Schwartz functions on $\R^d$ and $\mathscr{S}'(\R^d)$ be the set of tempered distributions. For an open set $\Omega$, we denote by $\mathscr{E}'(\Omega)$ the class of tempered distributions with compact support in $\Omega$. The Fourier transform is denoted by $\mathscr{F}$ and defined on $\mathscr{S}(\R^d)$ by the formula
$$
\mathscr{F}(f)(\xi):=\ili_{\R^d} f(x) e^{-2\pi i x \xi}dx, \;\; f\in \mathscr{S}(\R^d).
$$
We next introduce a class of functions (or {\it symbols}) that define standard pseudo-differential operators.
\begin{defin}
For a number $m\in \R$, we say that a function $p(x, \xi)\colon \Omega \times \R^d \to \R$ belongs to the class $S^m(\Omega)$ if $p\in C^\infty(\Omega \times \R^d)$ and for every compact set $K\subset \Omega$ and multi-indices $\alpha, \beta$ there exists a constant $C(K, \alpha, \beta)$ such that
\begin{equation}
|D^\alpha_\xi D^\beta_x p(x, \xi)| \leqslant C(K, \alpha, \beta) |\xi|^{m-|\alpha|}, \; \; x\in \Omega, \;\; |\xi|>1,
\end{equation}
where we use the notation
$$
D^\alpha_\xi p(x,\xi) := \frac{\partial^{|\alpha|}}{\partial \xi^\alpha} p(x, \xi), \;\; D^\beta_x p(x,\xi) := \frac{\partial^{|\beta|}}{\partial x^\beta} p(x, \xi).
$$
\end{defin}
The Paley--Schwartz--Wiener theorem implies that if $f\in \mathscr{E}'(\R^d)$, then its Fourier transform $\mathscr{F}(f)$ is a function with 
$$
|\mathscr{F}(f)(\xi)|\leqslant C(1  + |\xi|)^N, \; \; \xi \in \R^d
$$
for some positive constants $C$ and $N$. If $p$ belongs to $S^m(\Omega)$ and $f\in \mathscr{E}'(\Omega)$, then, for a fixed $x$, we can view $p(x, \xi)\mathscr{F}(f)(\xi)$ as a tempered distribution. We define an operator $P$ on $\mathscr{E}'(\Omega)$ by
\begin{equation}\label{defofpseudodiff}
P(f)(x):=\mathscr{F}^{-1}(p(x, \cdot)\mathscr{F}(f)(\cdot))(x), \; \; x\in \Omega.
\end{equation}
We further set
$$
\Psi^m(\Omega):=\{P\colon p\in S^m(\Omega)\}, \;\;\; \Psi^{-\infty}(\Omega):=\bigcap_{m\in \R}\Psi^m(\Omega).
$$

We continue with the definition of Sobolev spaces. For every $s\in \R$ and $p\in (1,\infty)$ set
$$
W^{s,p}_0(\Omega):=\{f\in \mathscr{E}'(\Omega)\colon \mathscr{F}^{-1}\left[(1+|\xi|^2)^{s/2}\cdot \mathscr{F}(f)(\xi)\right]\in L^p(\R^d)\}
$$
and
$$
W^{s, p}_{loc}=\{f\in \mathscr{S}'(\R^d)\colon \varphi f\in W^{s, p}_0(\R^d) \; \; \text{for any}\;\; \varphi\in C_0^\infty(\R^d)\}.
$$
As with the usual Sobolev spaces (i.e., with integer $s$), the following embedding property holds (see, e.g., \cite{Demengel} or \cite{DiNiezza}).
\begin{theorem}\label{sobolevembeddind}
Assume $\Omega$ is an open set in $\R^d$ with smooth boundary. If $sp>d$ and $f\in W^{s,p}_0(\Omega)$, then $f\in L^\infty(\Omega)$. 
\end{theorem}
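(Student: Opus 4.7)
The plan is to identify $W_0^{s,p}(\Omega)$ with a space of Bessel potentials of $L^p$ functions and then obtain the $L^\infty$ bound by a direct convolution estimate using H\"older's inequality.

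\textbf{Step 1 (Bessel potential representation).} Put
\[
g := \mathscr{F}^{-1}\bigl[(1+|\xi|^2)^{s/2}\,\mathscr{F}(f)(\xi)\bigr],
\]
so that $g \in L^p(\R^d)$ by the very definition of $W_0^{s,p}(\Omega)$. Let $G_s$ denote the Bessel kernel on $\R^d$, characterized by $\mathscr{F}(G_s)(\xi) = (1+|\xi|^2)^{-s/2}$. Then, as tempered distributions,
\[
f = G_s * g.
\]

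\textbf{Step 2 (Size estimates for $G_s$).} Next I would recall (or cite from Stein's \emph{Singular Integrals}) the classical pointwise description of $G_s$: it is a nonnegative $C^\infty$ function on $\R^d\setminus\{0\}$, it decays exponentially as $|x|\to\infty$, and near the origin it satisfies
\[
G_s(x) \lesssim \begin{cases} |x|^{s-d}, & 0<s<d,\\ 1+\bigl|\log|x|\bigr|, & s=d,\\ 1, & s>d.\end{cases}
\]
These bounds follow from the subordination identity $G_s(x) = c_{s,d}\int_0^\infty t^{(s-d)/2-1} e^{-t-|x|^2/(4t)}\,dt$. In particular, $G_s \in L^{q}(\R^d)$ for every $q \in [1,\infty)$ satisfying $q(d-s) < d$, which in turn is equivalent to $s > d/p$, i.e., to the hypothesis $sp > d$, with $q = p' := p/(p-1)$ (and the condition is automatic when $s \geq d$).

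\textbf{Step 3 (H\"older and conclusion).} By Step 2, $G_s \in L^{p'}(\R^d)$. Applying H\"older's inequality to the convolution from Step 1,
\[
|f(x)| = \bigl|(G_s * g)(x)\bigr| \leq \|G_s\|_{L^{p'}(\R^d)}\,\|g\|_{L^p(\R^d)}
\]
for almost every $x\in \R^d$, which shows $f\in L^\infty(\R^d)$ and hence $f\in L^\infty(\Omega)$. The only genuine technical obstacle is Step 2, namely confirming the sharp local singularity of $G_s$; but since this is a standard fact about Bessel kernels, the argument reduces to a clean H\"older estimate once it is cited. The smoothness of $\partial\Omega$ plays no essential role here because $f$ is already compactly supported in $\Omega$ (as an element of $\mathscr{E}'(\Omega)$); that hypothesis is relevant for companion embeddings (e.g.\ extension or trace theorems) in the references, not for this particular statement.
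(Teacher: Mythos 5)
The paper does not prove this theorem; it simply cites \cite{Demengel} and \cite{DiNiezza} for it, so there is no in-paper argument to compare against. Your Bessel-potential proof is correct and self-contained: with the paper's definition of $W_0^{s,p}(\Omega)$, the function $g=\mathscr{F}^{-1}[(1+|\xi|^2)^{s/2}\mathscr{F}f]$ is in $L^p$ by fiat, $f=G_s*g$, the stated singularity $G_s(x)\lesssim|x|^{s-d}$ near $0$ together with exponential decay at infinity gives $G_s\in L^{p'}(\R^d)$ precisely when $(d-s)p'<d$, which is equivalent to $sp>d$, and H\"older then yields $\|f\|_\infty\leqslant\|G_s\|_{p'}\|g\|_p$. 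Your observation that the smoothness of $\partial\Omega$ is not actually used is also right: the definition is via the global Fourier transform of a compactly supported distribution, so no extension or trace argument is needed, and the hypothesis is there only to match the form of the cited embedding theorems. One small point worth making explicit if you wrote this up fully: since $G_s\in L^{p'}$ and $g\in L^p$, the convolution $G_s*g$ is not merely bounded but continuous on $\R^d$, so $f$ has a continuous (hence bounded) representative, which is the precise sense in which $f\in L^\infty(\Omega)$.
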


The following theorem about the action of pseudo-differential operators on Sobolev spaces can be found in \cite[Theorem 2.1]{Treves} or \cite[Theorem 2.1D]{Taylor1991}.

\begin{theorem}\label{Treves21} 
If $P\in \Psi^m(\Omega)$ and $f\in W^{s,p}_0(\Omega)$, then $P(f)\in W^{s-m, p}_{loc}(\Omega)$. Moreover, if $P\in \Psi^{-\infty}(\Omega)$ and $f\in \mathscr{E}'(\Omega)$, then $P(f)\in C^\infty(\Omega)$. 
\end{theorem}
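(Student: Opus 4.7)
The plan is to reduce the first assertion to the $L^p$-boundedness of order-zero pseudo-differential operators, and to deduce the smoothing statement from a direct analysis of the Schwartz kernel. For the first assertion, the symbol calculus supplies the Bessel-type operators $J_t := \mathscr{F}^{-1}[(1+|\xi|^2)^{t/2}\mathscr{F}(\cdot)] \in \Psi^t(\R^d)$, their two-sided parametrices $J_{-t}$, and the composition rule $\Psi^{m_1}\circ \Psi^{m_2} \subset \Psi^{m_1+m_2}$ (valid modulo smooth cutoffs on $\Omega$). Given any $\varphi\in C_c^\infty(\Omega)$, to show $\varphi P(f)\in W^{s-m,p}_0(\R^d)$ it is equivalent to show $J_{s-m}(\varphi P(f))\in L^p(\R^d)$. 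Writing $f = J_{-s}(J_s f)$ with $J_s f\in L^p(\R^d)$ (since $f\in W^{s,p}_0(\Omega)$), I would recognize $J_{s-m}\circ \varphi\circ P\circ J_{-s}$, after appropriate localization with cutoffs supported near $\supp\varphi$ and $\supp f$, as an element of $\Psi^0$, thereby reducing the mapping property to the $L^p$-boundedness of order-zero operators with compactly supported symbols.

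The technical core is therefore that any $Q\in\Psi^0$ with compactly supported $x$-symbol is bounded on $L^p(\R^d)$ for $1<p<\infty$. I would analyze the Schwartz kernel
\[
K(x,y) = \int_{\R^d} e^{2\pi i (x-y)\cdot \xi}\, q(x,\xi)\, d\xi,
\]
and derive the Calder\'on--Zygmund kernel estimates $|D_x^\alpha D_y^\beta K(x,y)|\leq C_{\alpha,\beta}|x-y|^{-d-|\alpha|-|\beta|}$ for $x\neq y$ via repeated integration by parts against the phase, using the identity $e^{2\pi i (x-y)\cdot \xi} = |x-y|^{-2N}(-\Delta_\xi)^N e^{2\pi i (x-y)\cdot \xi}$ together with the order-zero symbol estimates $|D_\xi^\alpha q(x,\xi)|\leq C_{\alpha,K}(1+|\xi|)^{-|\alpha|}$. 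Combined with $L^2$-boundedness, accessible through a direct $TT^*$ estimate or through the Calder\'on--Vaillancourt theorem, the standard Calder\'on--Zygmund theorem then yields $L^p$-boundedness in the range $1<p<\infty$.

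For the smoothing assertion, if $P\in\Psi^{-\infty}(\Omega)$ then $p(x,\xi)$ and every $x$-derivative of it decay faster than any polynomial in $\xi$, uniformly on compact subsets of $\Omega$. This rapid decay legitimizes unrestricted differentiation under the integral sign in the kernel formula, so $K(x,y)$ is $C^\infty$ on $\Omega\times\Omega$. For any $f\in\mathscr{E}'(\Omega)$ one then has $P(f)(x) = \langle f, K(x,\cdot)\rangle$, which is smooth in $x$ by dominated convergence applied to difference quotients against the compactly supported distribution $f$. The main obstacle in the program is the $L^p$-boundedness of order-zero operators: the composition bookkeeping in the symbol calculus is routine, but establishing both the $L^2$-boundedness and the singular-kernel bounds uniformly in the $x$-parameter demands careful Calder\'on--Zygmund analysis.
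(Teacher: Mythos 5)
The paper does not prove this theorem at all: it is quoted as a known background result and the reader is referred to Treves (Theorem 2.1) and Taylor (Theorem 2.1D) for the proof. Your sketch reconstructs, correctly and in the right order, the standard argument that those references use: conjugate by the Bessel potentials $J_t\in\Psi^t$ to reduce the order-$m$ mapping property to $L^p$-boundedness of compactly supported order-zero operators; obtain that boundedness by combining $L^2$-boundedness (Calder\'on--Vaillancourt or a $TT^*$ argument) with the Calder\'on--Zygmund kernel estimates that follow from repeated integration by parts against the oscillatory phase; and derive the $\Psi^{-\infty}$ smoothing statement from the $C^\infty$-smoothness of the Schwartz kernel paired against a compactly supported distribution. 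You are also right to flag the bookkeeping around proper supports and localizing cutoffs when composing the global operators $J_t$ with the local operator $P$ on $\Omega$ --- that is the only place where the clean order-addition $\Psi^{m_1}\circ\Psi^{m_2}\subset\Psi^{m_1+m_2}$ needs care, and it is handled in the references via pseudolocality. So the proposal is sound and consistent with the cited sources; there is simply no competing in-paper proof to compare against.
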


We further discuss regularity properties of solutions of the equation $Pu=f$. We say that the function $p\colon \Omega\times \R^d\to \R$ is {\it elliptic of order $m$} if $p\in S^m(\Omega)$ and for every $x\in \Omega$ there are two positive constants $c(x)$ and $r(x)$, such that
$$
|p(x, \xi)|\geqslant c(x)|\xi|^m, \; \; \mbox{for every $\xi$ with $|\xi|>r(x)$}.
$$

The following theorem can be found in \cite[Corollary 4.3]{Treves}.
\begin{theorem}\label{thparametrix}
Let $p$ be an elliptic function of order $m$ and $P\in \Psi^m(\Omega)$ be the corresponding operator defined as in \eqref{defofpseudodiff}. Then there exist $Q\in \Psi^{-m}(\Omega)$ and $R\in \Psi^{-\infty}(\Omega)$ such that 
$$
QP = I+R,
$$
where $I$ is the identity operator.
\end{theorem}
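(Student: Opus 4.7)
The plan is to construct $Q$ as an asymptotic sum $Q\sim Q_0+Q_1+Q_2+\cdots$ with $Q_j\in\Psi^{-m-j}(\Omega)$, where each correction kills the top-order part of the previous error, and then to realize this asymptotic series as a genuine $Q\in\Psi^{-m}(\Omega)$ via a Borel-type summation of the symbols.

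For the initial step, ellipticity gives $|p(x,\xi)|\geqslant c(x)|\xi|^m$ when $|\xi|>r(x)$, so I would set $q_0(x,\xi):=\chi(x,\xi)/p(x,\xi)$, where $\chi\in C^\infty(\Omega\times\R^d)$ equals $1$ for large $|\xi|$ on compact subsets of $\Omega$ and vanishes near $\xi=0$. Differentiating the quotient and combining the $S^m$ bounds on $p$ with the pointwise lower bound from ellipticity shows $q_0\in S^{-m}(\Omega)$. The symbol-composition formula of the $\Psi^m$-calculus produces the asymptotic expansion
$$
\sigma(Q_0 P)(x,\xi)\sim\sum_\alpha\frac{(-i)^{|\alpha|}}{\alpha!}\,D_\xi^\alpha q_0(x,\xi)\,D_x^\alpha p(x,\xi),
$$
whose leading term is $q_0 p$, which equals $1$ for $|\xi|$ large. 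Hence $Q_0 P-I\in\Psi^{-1}(\Omega)$ and its principal symbol, call it $-r_1$, lies in $S^{-1}(\Omega)$.

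Inductively, assume $Q^{(k)}:=Q_0+\cdots+Q_k$ satisfies $Q^{(k)}P-I\in\Psi^{-k-1}(\Omega)$ with principal symbol $-r_{k+1}\in S^{-k-1}(\Omega)$. Define $q_{k+1}:=r_{k+1}\, q_0\in S^{-m-k-1}(\Omega)$ and let $Q_{k+1}$ be the corresponding operator. Since $q_{k+1} p$ agrees with $r_{k+1}$ modulo lower-order terms, applying the composition formula to $Q_{k+1}P$ cancels $-r_{k+1}$ from the error, so $(Q^{(k)}+Q_{k+1})P-I\in\Psi^{-k-2}(\Omega)$. Iterating yields symbols $q_j\in S^{-m-j}(\Omega)$ of strictly decreasing order. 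A standard Borel-type construction then produces $q\in S^{-m}(\Omega)$ with $q\sim\sum_j q_j$: choose a smooth radial cutoff and radii $R_j\to\infty$ growing fast enough that $q(x,\xi):=\sum_j(1-\chi(\xi/R_j))\,q_j(x,\xi)$ converges in every $S^{-m}$-seminorm on each compact $K\subset\Omega$ and satisfies $q-\sum_{j=0}^N q_j\in S^{-m-N-1}(\Omega)$ for every $N$. The associated $Q\in\Psi^{-m}(\Omega)$ then satisfies $QP-I\in\Psi^{-N}(\Omega)$ for every $N$, so $R:=QP-I\in\Psi^{-\infty}(\Omega)$ as claimed.

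The principal technical obstacle is not any individual algebraic step but the justification of the symbol-composition expansion used at each iteration: one must represent $Q_j P$ as an oscillatory integral in $(x,y,\xi,\eta)$, perform a nonstationary-phase reduction (integration by parts in $\eta$) to fuse the two $\xi$-variables, and Taylor-expand the resulting amplitude around the diagonal to read off the asymptotic expansion, all while controlling error terms uniformly on compacts of $\Omega$ so as to retain $S^m$-membership with the required seminorm bounds. Once this core lemma of the $\Psi^m$-calculus is in hand, the Borel summation of the $q_j$ is a routine diagonal cutoff argument and the inductive cancellation of the error symbols is purely algebraic.
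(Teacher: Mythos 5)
The paper itself does not prove Theorem \ref{thparametrix}; it invokes it as a known result, citing Corollary 4.3 of Treves' book, and uses it as a black box. So there is no internal proof to compare against. Your argument is the standard elliptic parametrix construction, and it is essentially correct as written: the choice $q_0=\chi/p$, the verification that $q_0\in S^{-m}(\Omega)$ via the quotient rule and locally uniform ellipticity, the inductive correction $q_{k+1}=r_{k+1}q_0$, and the Borel-type asymptotic summation to produce $q\sim\sum_j q_j$ are exactly the steps one finds in Treves, H\"ormander, or Taylor.

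One technical point you pass over silently: on a general open $\Omega\subset\R^d$, the composition $Q_jP$ of two operators in $\Psi^{\ast}(\Omega)$ is not automatically a pseudo-differential operator, and the symbol-composition expansion you quote requires one of the factors to be properly supported. The standard fix is to replace $P$ (or each $Q_j$) by a properly supported operator differing from it by an element of $\Psi^{-\infty}(\Omega)$; since all such modifications can be absorbed into the remainder $R$, the conclusion $QP=I+R$ with $R\in\Psi^{-\infty}(\Omega)$ is unaffected, but this reduction is a genuine step in the argument and is not merely a matter of ``controlling error terms uniformly on compacts.'' With that adjustment made explicit, and taking the composition asymptotic as the core lemma you already identify, your proof establishes the theorem.
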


\section{Proof of Theorem \ref{thequil}}\label{sectionproofs2}
The case $s=d-1$ is done in \cite{sjogren1972}, thus we focus on the case $s<d-1$. Since $A\subset \R^{d+1}$ is $d$-regular at $x_0$ and $s\in (d-1, d)$, we obtain from Theorem \ref{coralwaysequal} that $U_s^\mu(x) = W_s(A)$ for any $x\in A\cap B(x_0, r_1)$ for some $r_1>0$. Since $A$ is $C^\infty$-smooth at $x_0$, there exists a $C^\infty$-smooth map $\psi\colon \tilde{B}\to B(x_0, r_0)$ such that $\tilde{B}\subset \R^d$ is open. Without loss of generality, we assume $r_0<r_1/2$. Set
\begin{equation}\label{temp321}
\textup{d}\mu^1:=\mathbbm{1}_{B(x_0, r_0)} \textup{d}\mu_s, \;\;\;\;\; \mu^2:=\mu_s-\mu^1,
\end{equation}
and 
$$
\nu:=\mu^1 \circ \psi.
$$
We notice that for $\tilde{x}\in \psi^{-1}(B(x_0, r_0/2))$ we have
$$
U^{\mu^1}_s(\psi(\t x)) = W_s(A)-U_s^{\mu^2}(\psi(\t x))
$$
and the right-hand side is a smooth function. Therefore, $U^{\mu^1}(\psi(\t x))\in C^\infty(\psi^{-1}(B(x_0, r_0/2)))$. 

We further write
\begin{equation}\label{glupiykorol}
U_s^{\mu^1}(\psi(\t x)) = \ili_{B(x_0, r_0)} \frac{\textup{d}\mu^1(y)}{|y-\psi(\t x)|^s} = \ili_{\t B} \frac{\textup{d}\nu(\t y)}{|\psi(\t y)-\psi(\t x)|^s}.
\end{equation}
Our next goal is to write the Taylor formula for $|\psi(\t y)-\psi(\t x)|^{-s}$ when $\t y$ is in the neighborhood of $\t x$. Since $\psi\in C^\infty$, there exists a $C^\infty$ matrix $a(\t x)$ and a $C^\infty$ vector-valued function $w_1(\t x, \t y)$ such that
$$
\psi(\t y)-\psi(\t x)=a(\t x)\cdot (\t y-\t x)+w_1(\t x, \t y)
$$
and for some constant $C$ and any component 
$$
|w_1(\t x, \t y)|\leqslant C|\t x - \t y|^2, \;\; \|\nabla_{\t x}\; w_1(\t x, \t y)\|_\infty \leqslant C|\t x - \t y|,
$$
where $\nabla_{\t x} w_1(\t x, \t y)$ is the matrix of gradients of $w_1$ in the first variable, and $\|\cdot \|_{\infty}$ is the $\ell^\infty$ matrix norm.
Therefore,
$$
|\psi(\t y)-\psi(\t x)|^2 = |a(\t x)\cdot (\t y - \t x)|^2 + w_2(\t x, \t y),
$$
where $w_2$ is a real-valued $C^\infty$ function with
$$
|w_2(\t x, \t y)|\leqslant C|\t x - \t y|^3, \; \; |\nabla_{\t x}\;w_2(\t x, \t y)|\leqslant C|\t x - \t y|^2.
$$ 
If $r_0$ is small enough and $\t y, \t x\in B(x_0, r_0/2)$, then
$$
 \left| \frac{w_2(\t x, \t y)}{|a(\t x)\cdot (\t y - \t x)|^2}\right| \leqslant 1/2.
$$ Consequently,
\begin{equation}\label{trubadur}
|\psi(\t y)-\psi(\t x)|^{-s}=|a(\t x)\cdot (\t y - \t x)|^{-s}\cdot \left(1+ \frac{w_2(\t x, \t y)}{|a(\t x)\cdot (\t y - \t x)|^2} \right)^{-s/2}.
\end{equation}
We notice that
$$
w_3(\t x, \t y):=\frac{w_2(\t x, \t y)}{|a(\t x)\cdot (\t y - \t x)|^2} \in C^1
$$
with $|\nabla_{\t x}\; w_3(\t x, \t y)|$ bounded. Therefore, \eqref{trubadur} implies
$$
|\psi(\t y)-\psi(\t x)|^{-s} = |a(\t x)\cdot (\t y - \t x)|^{-s} + w_4(\t x, \t y),
$$
where 
$$
|w_4(\t x, \t y)|\leqslant C_1 |\t y - \t x|^{-s+1}, \; \; \; \; |\nabla_{\t x}\; w_4(\t x, \t y)|\leqslant C_2|\t y - \t x|^{-s}.
$$ 
We plug this into \eqref{glupiykorol} to get
$$
U_s^{\mu_1}(\psi(\t x)) = \ili_{\t B}\frac{\textup{d}\nu(\t y)}{|a(\t x)\cdot (\t y - \t x)|^{s}} + \ili_{\t B}w_4(\t x, \t y) \textup{d}\nu(\t y).
$$
Since 
$$
\ili_{\t B}|\nabla_{\t x} w_4(\t x, \t y)|\textup{d}\nu(\t y) \leqslant C_2 \ili_{\t B} \frac{\textup{d}\nu(\t y)}{|\t y - \t x|^s} \leqslant C_3 \ili_{B(x_0, r_0)}\frac{\textup{d}\mu(y)}{|y - x|^s} \leqslant C_3 W_s(A),
$$
we see that the function $\t x \mapsto \ili_{\t B}w_4(\t x, \t y) \textup{d}\nu(\t y)$ belongs to $W^{1, \infty}(\psi^{-1}(B(x_0, r_0/4)))$.
Let $u$ be a Schwartz function equal to $1$ in $\psi^{-1}(B(x_0, r_0/4))$ and to $0$ outside of $\psi^{-1}(B(x_0, r_0/2))$. Then
\begin{equation}\label{buratino}
u(\t x) \ili_{\t B}\frac{\textup{d}\nu(\t y)}{|a(\t x)\cdot (\t y - \t x)|^{s}} = u(\t x)U_s^{\mu_1}(\psi(\t x))-u(\t x) \ili_{\t B}w_4(\t x, \t y) \textup{d}\nu(\t y) =: w(\t x) \in W_0^{1, \infty}(\R^d).
\end{equation}
We next show that the operator
\begin{equation}\label{karabas}
P\colon \nu \mapsto u(\t x) \ili_{\t B}\frac{\textup{d}\nu(\t y)}{|a(\t x)\cdot (\t y - \t x)|^{s}}
\end{equation}
is pseudo-differential. Namely, we use the Plancherel identity to obtain
\begin{equation}\label{duremar}
\ili_{\t B}\frac{\textup{d}\nu(\t y)}{|a(\t x)\cdot (\t y - \t x)|^{s}} = \ili_{\R^d} \mathscr{F}(\nu)(\xi) \overline{\mathscr{F}_{\t y}(|a(\t x)\cdot (\t y - \t x)|^{-s})(\xi)} \textup{d}\xi.
\end{equation}
By definition of the Fourier Transform, we have
$$
\mathscr{F}_{\t y}(|a(\t x)\cdot (\t y - \t x)|^{-s})(\xi) = \ili_{\R^d} |a(\t x)\cdot (\t y - \t x)|^{-s} e^{-2\pi i \t y \xi}\textup{d}\t y.
$$
Since the matrix $a(\t x)$ is a $d\times (d+1)$ matrix of rank $d$, we observe that the set $\{a(\t x)\cdot \t y\colon \t y \in \R^d\}$ is a $d$-dimensional linear subspace of $\R^{d+1}$. Take a rotation $R$ that maps this set to $\{y=(y(1), \ldots, y(d+1))\in \R^{d+1}\colon y(d+1)=0\}$ and an operator $T$ that maps the latter space to $\R^d$ by erasing the $(d+1)$'st coordinate. We make a change of variables 
$$
\t z=T\cdot R \cdot a(\t x)\cdot (\t y - \t x).
$$
By definition of $T$ and $R$, we have
$$
|\t z| = |T\cdot R \cdot a(\t x)\cdot (\t y - \t x)| = |a(\t x)(\t y - \t x)|,
$$
and therefore, setting $b(\t x):=(T\cdot R\cdot a(\t x))^{-1}$, we get
\begin{multline}\label{chichi}
\mathscr{F}_{\t y}(|a(\t x)\cdot (\t y - \t x)|^{-s})(\xi) = \ili_{\R^d} |a(\t x)\cdot (\t y - \t x)|^{-s} e^{-2\pi i \t y \xi}\textup{d}\t y \\
= 
e^{-2\pi i \t x \xi}\ili_{\R^d} |\t z|^{-s} e^{-2 \pi i (b(\t x)\t z) \xi}|\det(b(\t x))|\textup{d}\t z = |\det(b(\t x))|e^{-2\pi i \t x \xi} \mathscr{F}(|\t z|^{-s})((b^t(\t x)) \xi)\\
=|\det(b(\t x))|e^{-2\pi i \t x \xi}  |b^t(\t x) \xi|^{s-d}.
\end{multline}
We plug \eqref{chichi} into \eqref{duremar}:
\begin{multline}
\ili_{\t B}\frac{\textup{d}\nu(\t y)}{|a(\t x)\cdot (\t y - \t x)|^{s}} = \ili_{\R^d} \mathscr{F}(\nu)(\xi)|\det(b(\t x))|\cdot  |b^t(\t x) \xi|^{s-d} e^{2\pi i \t x \xi} \textup{d}\xi\\
 = \mathscr{F}^{-1}\Big(\mathscr{F}(\nu)(\xi)|\det(b(\t x))|\cdot  |b^t(\t x) \xi|^{s-d}\Big)(\t x).
\end{multline}
Setting 
$$
p(\t x, \xi):=u(\t x)|\det(b(\t x))|\cdot  |b^t(\t x) \xi|^{s-d},
$$
we obtain that the operator $P$ defined in \eqref{karabas} is an elliptic pseudo-differential with symbol $p\in S^{s-d}(\tilde{B})$. We apply Theorem \ref{thparametrix} to  equation \eqref{buratino}. Since $P\nu = w$, we get
\begin{equation}\label{temptemp}
\nu + R\nu = Qw, \; \; \; \; R\nu \in C^\infty(\tilde{B}).
\end{equation}
Further, since $w\in W^{1, \infty}_0(\tilde B)$, we get from Theorem \ref{Treves21} that $Qw\in W^{1+s-d, p}_{loc}(\tilde {B})$ for any $p>1$. By Theorem \ref{sobolevembeddind}, we obtain that $Qw \in L^\infty\Big(\psi^{-1}(B(x_0, r_0/4))\Big)$, and from \eqref{temptemp} we get $\nu\in L^\infty\Big(\psi^{-1}(B(x_0, r_0/4))\Big)$. Since the measure $\mu_1$ defined in \eqref{temp321} is an image of $\nu$ under a smooth map $\psi^{-1}$, we deduce that for $r<r_0/4$
$$
\mu(B(x_0, r))= \nu(\psi^{-1}(B(x_0, r))) \leqslant C_1 \mathcal{H}_d(\psi^{-1}(B(x_0, r))) \leqslant C_2 r^d.
$$
\hfill \qed

\bibliography{ref}
\bibliographystyle{plain}

\end{document}